\documentclass[11pt]{article}
\usepackage{amsmath, amscd, amssymb, latexsym, epsfig, color, amsthm}
\setlength{\textwidth}{6.5in}
\setlength{\textheight}{8.6in}
\setlength{\topmargin}{0pt}
\setlength{\headsep}{0pt}
\setlength{\headheight}{0pt}
\setlength{\oddsidemargin}{0pt}
\setlength{\evensidemargin}{0pt}
\flushbottom
\pagestyle{plain}

\numberwithin{equation}{section}

\newtheorem{theorem}{Theorem}[section]
\newtheorem{proposition}[theorem]{Proposition}
\newtheorem{question}[theorem]{Question}
\newtheorem{corollary}[theorem]{Corollary}

\newtheorem{lemma}[theorem]{Lemma}

\theoremstyle{definition}
\newtheorem{definition}[theorem]{Definition}
\newtheorem{example}[theorem]{Example}
\newtheorem{remark}[theorem]{Remark}
\DeclareMathOperator{\skel}{Skel}
\DeclareMathOperator\lk{\mathrm{lk}}
\DeclareMathOperator\Star{\mathrm{st}}

\newcommand{\field}{{\bf k}}
\newcommand{\N}{{\mathbb N}}
\newcommand{\I}{{\mathcal I}}
\newcommand{\C}{{\mathcal C}}

\title{From flag complexes to banner complexes}
\author{Steven Klee\\
\small Department of Mathematics \\[-0.8ex]
\small Seattle University\\[-0.8ex]
\small Seattle, WA 98122, USA\\[-0.8ex]
\small \texttt{klees@seattleu.edu}
\and Isabella Novik
\thanks{Research is partially
supported by NSF grant DMS-1069298}\\
\small Department of Mathematics\\[-0.8ex]
\small University of Washington\\[-0.8ex]
\small Seattle, WA 98195-4350, USA\\[-0.8ex]
\small \texttt{novik@math.washington.edu}
}

\begin{document}
\maketitle

\begin{abstract}
A notion of an $i$-banner simplicial complex is introduced.
For various values of $i$, these complexes interpolate between
the class of flag complexes and the class of all simplicial
complexes.  Examples of simplicial spheres of an arbitrary
dimension that are $(i+1)$-banner but not $i$-banner are
constructed. It is shown that several theorems for flag
complexes have appropriate $i$-banner analogues.  Among them are
(1) the codimension-$(i+j-1)$ skeleton of an $i$-banner homology sphere
$\Delta$ is $2(i+j)$-Cohen--Macaulay for all $0\leq j\leq \dim\Delta+1-i$, and
(2) for every $i$-banner simplicial complex $\Delta$ there exists a
balanced complex $\Gamma$ with the same number of vertices as $\Delta$ 
whose face numbers of dimension $i-1$ and higher coincide with those of $\Delta$.
\end{abstract}

\section{Introduction}

Flag complexes (see, for instance, \cite{Ath,ChD,Froh,Gal,NePe}
and also \cite[Chapter III]{St96}) form a fascinating family of
simplicial complexes with many nice results and a lot of open problems.
Very recently Bj\"orner and Vorwerk \cite{BjVor} introduced a class of
{\em banner} simplicial complexes that strictly contains that of flag
complexes. Inspired by their work, we define a notion of
$i$-banner complexes: for various values of
$i$ these complexes interpolate between flag complexes and
Bj\"orner--Vorwerk's banner complexes, and between those complexes
and the class of all simplicial complexes; we then extend several known
theorems for flag complexes to the $i$-banner ones.

One motivation for the Bj\"orner--Vorwerk's paper came from
problems related to connectivity of graphs.
A graph $G$ is called {\em $q$-connected} if it  has at least
$q+1$ vertices and removing an arbitrary subset of at most
$q-1$ vertices from $G$ results in a connected graph. Similarly,
a simplicial complex $\Delta$ is {\em $q$-Cohen--Macaulay}
($q$-CM, for short) if removing
an arbitrary subset of at most $q-1$ vertices from $\Delta$ results
in a CM complex of the same dimension as $\Delta$. In particular,
a graph is $q$-connected if and only if it is $q$-CM, and a 0-dimensional
complex is $q$-CM if and only if it has at least $q$ vertices.

Barnette \cite{Bar} (generalizing a result of Balinski \cite{Bal})
proved that the graph, or 1-skeleton, of a
$(d-1)$-dimensional polyhedral pseudomanifold is $d$-connected.
Athanasiadis \cite{Ath} then verified that the graph of a {\em flag}
$d$-dimensional simplicial pseudomanifold is $2d$-connected.
In the case of CM complexes much more can be said:
it was shown by Fl{\o}ystad \cite{Fl} that the codimension-$j$
skeleton of a CM complex is $(j+1)$-CM, while the authors
and Goff \cite[Theorem 4.1]{GKN} proved that the codimension-$j$
skeleton of a flag 2-CM complex is $2(j+1)$-CM.

Bj\"orner and Vorwerk \cite{BjVor} introduced the class of
{\em banner} simplicial complexes, which properly contains
the class of flag simplicial complexes, and
proved that Athanasiadis's result continues to hold in this
generality; namely, the graph of any $d$-dimensional banner normal
pseudomanifold is $2d$-connected.
Here we define the class of $i$-banner complexes for $i\geq 1$.
We show that the relationship between all these classes is as follows.

\begin{itemize}
\item The class of flag complexes coincides with the classes of $1$- and
$2$-banner complexes.

\item For $2 \leq i \leq d$, the class of $(d-1)$-dimensional
complexes that are $i$-banner is strictly contained in that
of $(i+1)$-banner complexes.

\item The class of $(d-1)$-dimensional complexes that are banner
 in the sense of  Bj\"orner--Vorwerk coincides with
the class of $(d-1)$-banner $(d-1)$-dimensional complexes.

\item All $(d-1)$-dimensional simplicial complexes are
$(d+1)$-banner.
\end{itemize}

We will prove that the codimension-$(i+j-1)$ skeleton
of an $i$-banner homology sphere $\Delta$ is $2(i+j)$-CM
for all $0\leq j\leq \dim\Delta+1-i$ (see Theorem \ref{thm:main}).
This result can thus be considered as an ``interpolation''
between Bj\"orner--Vorwerk's theorem \cite[Theorem 4.4]{BjVor} and the
result on CM-connectivity of skeleta of flag
complexes \cite[Theorem 4.1]{GKN} for homology spheres.
We also establish an analogous result for homology manifolds
(see Corollary \ref{cor:main}).

 Bj\"orner and Vorwerk \cite{BjVor} also  introduce a certain invariant
$b_\Delta$ that for a pseudomanifold $\Delta$ controls the connectivity
of the graph of $\Delta$. Here we define a family of invariants
$\{b_i(\Delta) : i\geq 0\}$ that contains $b_\Delta$ from \cite{BjVor}
as $b_1(\Delta)$. When $\Delta$ is a homology sphere,
we provide lower bounds on CM-connectivity of the skeleta of $\Delta$ in
terms of these statistics (see Theorem \ref{thm:banner-connectivity}).
The $(i=1)$-case of  our result recovers Theorem 1.1 of \cite{BjVor} for
homology spheres.

Our next result concerns the face numbers of $i$-banner complexes.
A conjecture posed by Eckhoff \cite{Eckh88} and Kalai (unpublished)
and solved by Frohmader \cite{Froh} posits that for every flag complex
there exists a balanced complex with the same face numbers. We establish
the following extension of this result (Theorem \ref{thm:face}): for
every $i$-banner complex $\Delta$ there is a balanced complex $\Gamma$ 
on the same number of vertices whose face numbers of dimension $i-1$ 
and higher coincide with those of $\Delta$.

Many of the proofs in this paper are natural extensions
of the proofs of the original results for flag complexes.
However we believe that the notion of $i$-banner complexes
will be useful in the study of simplicial complexes and
 their face numbers providing new ways of
``interpolating'' results/conjectures on all
simplicial complexes to a hierarchy of results/conjectures
on $i$-banner complexes for various values of $i$. We list
some open problems along these lines in the last section.
We remark that another family interpolating
between flag complexes and general simplicial complexes is
that of complexes without large missing faces 
(cf.~Lemma \ref{lemma:missing-faces} below); 
such complexes, and especially their $f$-numbers, were extensively 
studied in \cite{Nevo}.

The rest of this note is organized as follows: in Section 2 we collect
several standard definitions and results pertaining to simplicial complexes.
In Section 3, we define $i$-banner complexes and outline some of their basic
properties that will be useful in later sections. In Section~4 we provide
examples of $(i+1)$-banner spheres that are not $i$-banner. In Section 5 we
discuss CM-connectivity of $i$-banner complexes; then in Section 6 we define
$b_i(\Delta)$ invariants and
establish lower bounds on the CM-connectivity of the skeleta of homology
spheres in terms of these statistics. In Section 7 we study face numbers
of $i$-banner complexes. We close in Section 8 with a few open problems.

\section{Preliminaries}
For the sake of completeness, we collect here several definitions and
results pertaining to simplicial complexes.
An excellent reference to this material is Stanley's book \cite{St96}.

A simplicial complex $\Delta$ on the vertex set $V=V(\Delta)$
is a collection of subsets of $V$ that is closed under inclusion and
contains all singletons $\{i\}$ for $i\in V$. The elements of $\Delta$ are
called its {\em faces}, and the maximal faces under inclusion
are called {\em facets}. A set $F\subseteq V$ is called a
{\em missing face} of $\Delta$ if it is not a face of $\Delta$,
but all of its proper subsets are faces. A simplicial complex $\Delta$ is
{\em flag} if all missing faces of $\Delta$ have size~2.

Let $\Delta$ be a simplicial complex on the vertex set $V$.
For $F\in \Delta$, set $\dim F:= |F|-1$ and
define the \textit{dimension of $\Delta$}, $\dim \Delta$,
as the maximal dimension of its faces.
We denote by $f_j=f_j(\Delta)$, where $-1\leq j \leq \dim\Delta$,
the number of $j$-dimensional faces of $\Delta$ ($j$-faces, for short).
 The \textit{$j$-th skeleton of $\Delta$}, $\skel_j(\Delta)$, is
defined as $\skel_j(\Delta):=\{F\in\Delta \ : \ \dim F \leq j\}$.

A $(d-1)$-dimensional simplicial complex $\Delta$ is called
\textit{balanced} if the graph of $\Delta$ is $d$-colorable. Equivalently,
$\Delta$ is balanced if one can partition the vertex set of
$\Delta$ into $d$ sets $V_1,\ldots, V_d$ in such a way that for every face
$F\in\Delta$ and for every $1\leq k \leq d$, $|F\cap V_k|\leq 1$.

Let $\Delta_1$ and $\Delta_2$ be simplicial complexes
on disjoint vertex sets $V_1$ and $V_2$. Then their \textit{join}
is the following simplicial complex on $V_1\cup V_2$,
\[
\Delta_1\ast\Delta_2:=\{F_1\cup F_2 \ : \ F_1\in \Delta_1,  F_2\in
\Delta_2\}.
\]
The \textit{suspension of $\Delta$}, $\Sigma\Delta$, is the join of
$\Delta$ with a 0-dimensional sphere.

If $\Delta$ is a simplicial complex and $F$ is a face of $\Delta$,
 then the \textit{link} of $F$ in $\Delta$ is
 $\lk_{\Delta} F=\lk F:= \{G\in\Delta \ : \
 F\cup G\in\Delta, \,\, F\cap G =\emptyset \}$.
Also, for a subset $W$ of the vertex set $V$ of $\Delta$,
let $\Delta[W]:= \{F \in \Delta\ : \ F \subseteq W\}$
denote the \textit{restriction} of $\Delta$ to the vertices in $W$
and $\Delta_{-W} :=\{F\in\Delta \ : \ F\subseteq V\setminus W\}$
denote the restriction of $\Delta$ to $V\setminus W$.

Using a result of Reisner \cite{Reis},
we say that a $(d-1)$-dimensional complex $\Delta$ is
{\em Cohen--Macaulay} over $\field$ (CM, for short)
if $\tilde{H}_i(\lk F; \field)=0$ for all $F\in\Delta$
(including the empty face) and all $i<d-|F|-1$.
Here $\field$ is a field and $\tilde{H}_i(-, \field)$
denotes the $i$th reduced simplicial homology with coefficients in $\field$.
If in addition, $\tilde{H}_{d-|F|-1}(\lk F; \field)\cong \field$ for every
$F\in \Delta$, then $\Delta$ is called a {\em homology sphere over $\field$}
(or a \textit{Gorenstein* complex over $\field$}).
We say that $\Delta$ is {\em $q$-CM} if for all $W\subset V$ with $|W|\leq
q-1$, the complex $\Delta_{-W}$ is CM  and has the same dimension as

$\Delta$. 2-CM complexes are also known as doubly CM complexes. Every
simplicial sphere (that is, a simplicial complex whose geometric realization
is homeomorphic to a sphere) is a homology sphere (over any $\field$), and
every homology sphere over $\field$ is doubly CM over $\field$. Moreover,
joins of homology spheres are homology spheres.

A simplicial complex $\Delta$ is called \textit{Buchsbaum over $\field$}
if $\Delta$ is \textit{pure} (that is, all facets
of $\Delta$ have the same dimension) and all vertex links of $\Delta$
are CM over $\field$. As with CM complexes, we say that
$\Delta$ is {\em $q$-Buchsbaum} if for all
$W\subset V$ with $|W|\leq q-1$, the complex
$\Delta_{-W}$ is Buchsbaum  and has the same dimension as $\Delta$.
A Buchsbaum complex all of whose vertex links are homology spheres over
$\field$ is a {\em homology manifold over $\field$}. Every simplicial
manifold is a homology manifold (over any $\field$) and every
homology manifold over $\field$ is 2-Buchsbaum over~$\field$.

\section{Basic properties of $i$-banner complexes}
We are now in a position to define $i$-banner complexes and present
some of their properties.

\begin{definition}
Let $\Delta$ be a $(d-1)$-dimensional simplicial complex
on the vertex set $V(\Delta)$.
\begin{itemize}
\item A subset $T$ of $V(\Delta)$ is called a \textit{clique} if every
two vertices of $T$ form an edge of $\Delta$.
\item A clique  $T \subseteq V(\Delta)$ is \textit{critical}
if $T \setminus \{v\}$ is a face of $\Delta$ for some $v \in T$.
\item For an $1 \leq i \leq d$, we say that $\Delta$ is \textit{$i$-banner}
if every critical clique $T$ of size at least $i+1$ is a face of $\Delta$.
\end{itemize}
\end{definition}

\noindent
When $\dim\Delta=d-1$, saying that $\Delta$ is $(d-1)$-banner amounts to
requiring that $\Delta$ has no critical cliques of size $d$ or $d+1$.
Complexes with this property are precisely the \textit{banner complexes}
in the sense of \cite{BjVor}.

The following two lemmas show that $i$-banner complexes for
various values of $i$ interpolate between the class of flag complexes
and the class of all simplicial complexes.

\begin{lemma}  \label{lemma:i+1}
Every $i$-banner complex is also $(i+1)$-banner; every $(d-1)$-dimensional
simplicial complex is $(d+1)$-banner.
\end{lemma}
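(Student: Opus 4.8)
The plan is to prove the two assertions separately, both directly from the definition of $i$-banner complexes. For the first claim, suppose $\Delta$ is $i$-banner; I want to show $\Delta$ is $(i+1)$-banner. Let $T$ be a critical clique of $\Delta$ with $|T| \geq (i+1)+1 = i+2$. Then in particular $|T| \geq i+1$, so $T$ is a critical clique of size at least $i+1$; since $\Delta$ is $i$-banner, $T$ is a face of $\Delta$. This is exactly what is required for $\Delta$ to be $(i+1)$-banner, so the first assertion follows immediately. The underlying point is simply that the condition of being $i$-banner becomes weaker as $i$ grows, because it constrains fewer cliques.

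For the second claim, let $\Delta$ be an arbitrary $(d-1)$-dimensional complex; I want to show it is $(d+1)$-banner, i.e.\ that every critical clique $T$ with $|T| \geq (d+1)+1 = d+2$ is a face. But if $T$ is a critical clique, then by definition $T \setminus \{v\}$ is a face of $\Delta$ for some $v \in T$, hence $\dim(T \setminus \{v\}) = |T| - 2 \leq \dim \Delta = d-1$, forcing $|T| \leq d+1$. Therefore there are \emph{no} critical cliques of size at least $d+2$ at all, and the $(d+1)$-banner condition holds vacuously.

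The argument is entirely formal, so there is no real obstacle; the only thing to be careful about is the off-by-one bookkeeping in the clique sizes (matching "size at least $i+1$" in the definition against the shifted index) and the convention $\dim \Delta = d-1$ for a $(d-1)$-dimensional complex, which is what pins down $|T| \leq d+1$ in the second part. I would also note in passing that the second statement could alternatively be phrased as saying that once $i$ exceeds $\dim\Delta + 1$ the $i$-banner condition imposes nothing, which makes it natural to restrict attention to $1 \leq i \leq d$ as in the definition.
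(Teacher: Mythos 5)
Your proof is correct and follows essentially the same route as the paper: the first claim is immediate since the $(i+1)$-banner condition quantifies over a subset of the cliques covered by the $i$-banner condition, and the second claim holds vacuously because a critical clique of size at least $d+2$ would force a face of dimension at least $d$. The dimension bookkeeping ($\dim(T\setminus\{v\})=|T|-2\leq d-1$) is exactly the observation the paper uses.
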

\begin{proof}
The first part is immediate from the definition of $i$-banner
complexes, while the second part follows from the observation that
a complex containing a critical clique of size at least $d+2$ must
have dimension at least $d$.
\end{proof}

\begin{lemma} \label{lemma:flag}
The following conditions are equivalent for any simplicial complex $\Delta$.
\begin{enumerate}
\item $\Delta$ is flag,
\item $\Delta$ is $1$-banner,
\item $\Delta$ is $2$-banner.
\end{enumerate}
\end{lemma}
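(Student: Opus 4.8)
The plan is to prove the chain of implications $(1)\Rightarrow(3)\Rightarrow(2)\Rightarrow(1)$, or more economically to observe that $(2)\Rightarrow(3)$ is already given by Lemma~\ref{lemma:i+1}, so it suffices to establish $(1)\Rightarrow(2)$ and $(3)\Rightarrow(1)$. For $(1)\Rightarrow(2)$: suppose $\Delta$ is flag, and let $T$ be a critical clique of size at least $2$. I want to show $T\in\Delta$. Since $T$ is a clique, every $2$-element subset of $T$ is an edge of $\Delta$, hence a face. If $T$ itself were not a face, then $T$ (or some subset of it) would contain a missing face, and since $\Delta$ is flag every missing face has size $2$; but all $2$-subsets of $T$ are faces, so $T$ has no missing subset, forcing $T\in\Delta$. (Here I use the standard fact that a set failing to be a face must contain a missing face, which follows by taking a minimal non-face inside it.) This handles $(1)\Rightarrow(2)$, and in fact shows a flag complex is $i$-banner for every $i\geq 1$.

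For $(3)\Rightarrow(1)$: suppose $\Delta$ is $2$-banner; I must show every missing face has size $2$. Suppose toward contradiction that $F$ is a missing face with $|F|\geq 3$. By definition of a missing face, every proper subset of $F$ is a face of $\Delta$; in particular every $2$-subset of $F$ is an edge, so $F$ is a clique. Moreover, for any $v\in F$, the set $F\setminus\{v\}$ is a proper subset of $F$ and hence a face, so $F$ is a \emph{critical} clique. Since $|F|\geq 3 = 2+1$, the $2$-banner hypothesis forces $F\in\Delta$, contradicting that $F$ is a missing face. Hence all missing faces have size $2$, i.e.\ $\Delta$ is flag.

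Combining these with the trivial implication $(2)\Rightarrow(3)$ from Lemma~\ref{lemma:i+1} (a $1$-banner complex is $2$-banner) closes the loop and proves all three conditions equivalent. The argument is entirely formal; the only mild subtlety is making sure the two ``directions'' of the definition of critical clique and of missing face line up correctly --- specifically that a missing face of size $\geq 3$ is automatically a critical clique --- and that is exactly the step I would present most carefully. There is no real obstacle here beyond bookkeeping with the definitions.
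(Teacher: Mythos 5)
Your proposal is correct, and it takes a mildly but genuinely different route from the paper. The paper proves two separate equivalences: first $1$-banner $\Leftrightarrow$ $2$-banner (the nontrivial direction resting on the observation that a critical clique of size $2$ is an edge, hence tautologically a face), and then $2$-banner $\Leftrightarrow$ flag, where the implication ``$2$-banner $\Rightarrow$ flag'' is proved by induction on the size of a clique $T$, showing every clique of size at least $3$ is a face (i.e.\ re-deriving the clique characterization of flagness), while the converse just cites the standard fact that every clique of a flag complex is a face. You instead close the cycle $(1)\Rightarrow(2)\Rightarrow(3)\Rightarrow(1)$: for $(3)\Rightarrow(1)$ you work directly with the missing-face definition of flagness, noting that a missing face of size at least $3$ is automatically a critical clique and hence forced to be a face by the $2$-banner hypothesis --- this is exactly the observation the paper records separately as Lemma~\ref{lemma:missing-faces}, specialized to $i=2$, and it replaces the paper's induction on clique size with a one-line contradiction. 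For $(1)\Rightarrow(2)$ you supply, via a minimal non-face argument, the proof of the standard fact that the paper merely asserts. What your route buys is brevity and a clean separation of the two definitions of flag (missing faces versus cliques); what the paper's route buys is that its induction explicitly reproves the clique characterization, which it then reuses implicitly in the converse direction. Both arguments are complete and correct; the only step you rightly flag as needing care --- that a missing face of size at least $3$ is a critical clique --- is handled correctly, since every proper subset of a missing face is a face, giving both the clique condition and the criticality condition.
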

\begin{proof}
First we show that $\Delta$ is $1$-banner if and only if it is $2$-banner.
As any $1$-banner complex is $2$-banner, we only need to show the converse.
Suppose $\Delta$ is $2$-banner, and let $T$ be a critical clique. In order
to show that $\Delta$ is also $1$-banner, we need only consider the case
that $|T| = 2$.  But in this case, a critical clique of size 2 is
tautologically a face of $\Delta$; and hence $\Delta$ is $1$-banner.

Now we show that $\Delta$ is $2$-banner if and only if it is flag.
Suppose first that $\Delta$ is $2$-banner, and let $T$ be a clique
in $\Delta$ of size at least $3$.  We prove that $T$ is a face of
$\Delta$ by induction on $|T|$.  When $|T| = 3$, $T$ is a critical clique,
and hence a face of $\Delta$ by definition.  Next, suppose $|T|>3$,
and let $u$ be a vertex of $T$.  Since $T - \{u\}$ is a clique in
$\Delta$ of size at least $3$, the inductive hypothesis implies that
$T - \{u\}$ is a face of $\Delta$. Thus $T$ is a critical clique of size at
least $3$, and hence a face of $\Delta$. Therefore, any clique in $\Delta$
is a face, and so $\Delta$ is flag.

Conversely, if $\Delta$ is a flag complex, then every clique
(and in particular a critical clique) of size at least $3$ in $\Delta$
is a face of $\Delta$.  Thus $\Delta$ is $2$-banner.
\end{proof}

Recall that if $\Delta$ is flag then so are all the links
of $\Delta$ as well as the suspension of $\Delta$.
For $i$-banner complexes the following analogous statements hold.

\begin{lemma} \label{lemma:link-deletion}
Let $\Delta$ be an $i$-banner simplicial complex with $i\geq 2$.
Then the link of $v$, $\lk_{\Delta}(v)$, is $(i-1)$-banner for
every vertex $v$ of $\Delta$.
Moreover, if $F\subseteq V(\lk_{\Delta}(v))$ is a face of $\Delta$
but not a face of $\lk_{\Delta}(v)$, then $\dim F\leq i-2$.
\end{lemma}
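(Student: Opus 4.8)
The plan is to prove both assertions by tracking how cliques and faces in the link $\lk_\Delta(v)$ relate, via the cone operation $F \mapsto F \cup \{v\}$, to cliques and faces in $\Delta$ itself. Throughout, write $L = \lk_\Delta(v)$ and recall that $V(L) \subseteq V(\Delta) \setminus \{v\}$.

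For the second (''moreover'') statement, I would argue by contradiction: suppose $F \subseteq V(L)$ is a face of $\Delta$ but not of $L$, and suppose $\dim F \geq i-1$, i.e.\ $|F| \geq i$. Since every vertex of $F$ lies in $V(L)$, each forms an edge with $v$ in $\Delta$, so $F \cup \{v\}$ is a clique in $\Delta$ of size $|F|+1 \geq i+1$. Moreover $F$ itself is a face of $\Delta$, so removing $v$ from $F \cup \{v\}$ leaves a face; hence $F \cup \{v\}$ is a \emph{critical} clique of size at least $i+1$. Since $\Delta$ is $i$-banner, $F \cup \{v\}$ is a face of $\Delta$, which forces $F \in \lk_\Delta(v) = L$, a contradiction. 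This gives $|F| \leq i-1$, i.e.\ $\dim F \leq i-2$.

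For the first statement, I want to show $L$ is $(i-1)$-banner: every critical clique $T$ of $L$ with $|T| \geq i$ is a face of $L$. Let $T$ be such a clique; critical in $L$ means $T \setminus \{u\}$ is a face of $L$ for some $u \in T$, hence $T \setminus \{u\} \cup \{v\} \in \Delta$. Every pair of vertices of $T$ is an edge of $L$, hence of $\Delta$, and every vertex of $T$ forms an edge with $v$ (as $T \subseteq V(L)$); so $T \cup \{v\}$ is a clique in $\Delta$ of size $|T|+1 \geq i+1$. It is critical in $\Delta$ because $(T \cup \{v\}) \setminus \{u\} = (T \setminus \{u\}) \cup \{v\}$ is a face of $\Delta$. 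Since $\Delta$ is $i$-banner, $T \cup \{v\} \in \Delta$, and therefore $T \in \lk_\Delta(v) = L$, as desired. Note this argument needs $|T|+1 \geq i+1$, i.e.\ $|T| \geq i$, which is exactly the threshold for $(i-1)$-banner-ness; and it needs $i \geq 2$ only so that $(i-1)$-banner is a meaningful (nonempty) condition — for $i=2$ the conclusion is that $L$ is $1$-banner, i.e.\ flag, consistent with Lemma 3.7.

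The main subtlety — rather than obstacle — is bookkeeping the clique/criticality conditions across the cone map and being careful that the size bounds line up precisely with the definitions; there is no deep difficulty, since $i$-banner-ness is designed exactly so that ''large'' critical cliques are faces, and coning with $v$ converts statements about $L$ into statements about $\Delta$ while raising both the clique size and (potentially) the relevant dimension by one. One should double-check the edge cases: a ''critical clique of size $\geq i$'' in $L$ could in principle be a clique that is already a face, in which case there is nothing to prove; the argument above handles the genuine case uniformly.
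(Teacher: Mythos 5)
Your proof is correct and follows essentially the same route as the paper: cone a critical clique (respectively, a large face) of the link with $v$ to obtain a critical clique of size at least $i+1$ in $\Delta$, invoke $i$-banner-ness, and pass back to the link. The only cosmetic difference is that you phrase the second part as a contradiction where the paper argues the contrapositive directly.
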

\begin{proof}
To prove the first part,
let $T$ be a critical clique of size at least $i$ in $\lk_{\Delta}(v)$.
This means there is some vertex $u \in T$ for which $T-\{u\}$ is a face of
$\lk_{\Delta}(v)$.  Since $(T - \{u\}) \cup \{v\} = (T \cup \{v\})-\{u\}$
is a face of $\Delta$, $T \cup \{v\}$ is a critical clique in $\Delta$ of
size at least $i+1$.  Thus $T \cup \{v\}$ is a face of $\Delta$,
and hence $T$ is a face of $\lk_{\Delta}(v)$.

For the second part, let $F\subseteq V(\lk_{\Delta}(v))$ be a face of
$\Delta$ of size at least $i$. Since all vertices of $F$ are in the link of
$v$, it follows that $F\cup\{v\}$ is a clique in $\Delta$ of size at least
$i+1$; in fact, since $F$ is a face of $\Delta$, this clique is a critical
clique.
Therefore, $F\cup\{v\}$ is a face of $\Delta$, and we infer that $F$ is a
face of $\lk_{\Delta}(v)$.
\end{proof}

\begin{lemma} \label{lemma:suspension}
Let $\Delta$ be a simplicial complex.  Then $\Delta$ is $i$-banner
if and only if the suspension of $\Delta$, $\Sigma\Delta$, is
$(i+1)$-banner.
\end{lemma}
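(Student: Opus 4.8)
The plan is to prove both directions by translating the critical-clique condition across the suspension construction, using the observation that the two cone points of $\Sigma\Delta = \Delta * \{a, b\}$ are the only ``new'' vertices and that they are never adjacent to each other. Write $V = V(\Delta)$ and note $V(\Sigma\Delta) = V \cup \{a, b\}$, where the missing faces of $\Sigma\Delta$ are exactly the missing faces of $\Delta$ together with the single new missing face $\{a, b\}$. In particular, a subset $T \subseteq V(\Sigma\Delta)$ containing both $a$ and $b$ cannot even be a clique, so every clique of $\Sigma\Delta$ either lies in $V$ or has the form $S \cup \{c\}$ for some clique $S$ of $\Delta$ and some $c \in \{a,b\}$. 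Moreover, in the latter case $S \cup \{c\}$ is a face of $\Sigma\Delta$ precisely when $S$ is a face of $\Delta$, since linking with a cone point is free.

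For the forward direction, suppose $\Delta$ is $i$-banner and let $T$ be a critical clique of $\Sigma\Delta$ of size at least $i+2$. If $T \subseteq V$, then $T$ is a critical clique of $\Delta$ (the vertex $v \in T$ with $T \setminus \{v\} \in \Sigma\Delta$ must have $T \setminus \{v\} \in \Delta$ since $T \setminus \{v\} \subseteq V$), and $|T| \geq i+2 > i+1$, so $T \in \Delta \subseteq \Sigma\Delta$. Otherwise $T = S \cup \{c\}$ with $S \subseteq V$ a clique of $\Delta$ and $c \in \{a,b\}$; here $|S| = |T| - 1 \geq i+1$. The witness $v \in T$ with $T \setminus \{v\} \in \Sigma\Delta$ is either $c$ itself, in which case $S = T \setminus \{c\} \in \Sigma\Delta$ forces $S \in \Delta$ and hence $T = S \cup \{c\} \in \Sigma\Delta$; or $v \in S$, in which case $T \setminus \{v\} = (S \setminus \{v\}) \cup \{c\} \in \Sigma\Delta$ forces $S \setminus \{v\} \in \Delta$, so $S$ is a critical clique of $\Delta$ of size at least $i+1$, hence $S \in \Delta$ and again $T \in \Sigma\Delta$. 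Thus $\Sigma\Delta$ is $(i+1)$-banner.

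For the converse, suppose $\Sigma\Delta$ is $(i+1)$-banner and let $T$ be a critical clique of $\Delta$ of size at least $i+1$. Then $T$ is a clique of $\Delta$, so $T \cup \{a\}$ is a clique of $\Sigma\Delta$; since $T \in \Delta$ does \emph{not} hold a priori but $T$ is critical, there is $v \in T$ with $T \setminus \{v\} \in \Delta$, whence $(T \cup \{a\}) \setminus \{v\} = (T \setminus \{v\}) \cup \{a\} \in \Sigma\Delta$, so $T \cup \{a\}$ is a critical clique of $\Sigma\Delta$ of size at least $i+2$. By hypothesis $T \cup \{a\} \in \Sigma\Delta$, and therefore $T = (T \cup \{a\}) \setminus \{a\} \in \Sigma\Delta$ lies in $\Delta$ (as $T \subseteq V$). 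Hence $\Delta$ is $i$-banner, completing the proof.

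I do not anticipate a serious obstacle here: the argument is entirely a bookkeeping exercise in how cliques and faces interact with a cone point, and the only point requiring a moment's care is the case split in the forward direction according to whether the ``critical'' witness vertex $v$ is a cone point or an original vertex — and, dually, remembering that in $\Sigma\Delta$ the pair $\{a,b\}$ is not a clique, so no clique can contain both cone points.
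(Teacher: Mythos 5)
Your proof is correct and follows essentially the same route as the paper's: both directions are handled by tracking the critical-clique witness across the suspension, with the forward direction split according to whether the witness is a cone point, and the converse by adjoining a single cone point to produce a critical clique of size $i+2$. The only cosmetic difference is that you make explicit the (implicitly used) observation that no clique of $\Sigma\Delta$ contains both suspension vertices.
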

\begin{proof}
Let $u$ and $u'$ be the suspension vertices of $\Sigma\Delta$.

Suppose first that $\Delta$ is $i$-banner, and let $T$ be a
critical clique in $\Sigma\Delta$ with $|T| \geq i+2$.  By definition,
there is a vertex $v \in T$ such that $T - \{v\}$ is a face of
$\Sigma\Delta$.
We examine three possible cases.  If neither $u$ nor $u'$ belongs to $T$,
then $T$ is a face of $\Delta$ since $\Delta$ is $i$-banner.  If $u$ (or
$u'$) belongs to $T$ and $v = u$, then $T - \{u\}$ is a face of $\Delta$,
and hence $T$ is a face of $\Sigma\Delta$.  Finally, if $u$ (or $u'$)
belongs to $T$ but $v \neq u$, then $T - \{u\}$ is a critical clique
of $\Delta$ of size at least $i+1$.  Thus $T - \{u\}$ is a face of
$\Delta$ and hence $T$ is a face of $\Sigma\Delta$.

Conversely, suppose $\Sigma\Delta$ is $(i+1)$-banner,
and let $T$ be a critical clique of $\Delta$ with $|T| \geq i+1$.
Then $T \cup \{u\}$ is a critical clique of $\Sigma\Delta$,
hence a face of $\Sigma\Delta$, which means $T$ is a face of $\Delta$
as well.
\end{proof}

As the suspension of the boundary complex of an $(i-1)$-simplex shows,
a complex with no missing faces of size larger than $i$
(for $i>2$) need not be $i$-banner; the converse statement, however, does
hold:
\begin{lemma} \label{lemma:missing-faces}
Let $\Delta$ be an $i$-banner complex with $i \geq 2$.
Then $\Delta$ has no missing faces of size larger than $i$.
\end{lemma}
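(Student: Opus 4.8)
The plan is a direct unwinding of the definitions, using the hypothesis $i\geq 2$ only to guarantee that the missing face in question is large enough to be a clique. Suppose for contradiction that $\Delta$ has a missing face $F$ with $|F|\geq i+1$. Since $i\geq 2$ we have $|F|\geq 3$ in particular $|F|\geq 2$.

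First I would check that $F$ is a clique. Every $2$-element subset of $F$ is a proper subset of $F$ (as $|F|\geq 3$), and since $F$ is a missing face, all of its proper subsets are faces of $\Delta$; hence every pair of vertices of $F$ spans an edge, i.e.\ $F$ is a clique. Next I would check that $F$ is a \emph{critical} clique: picking any $v\in F$, the set $F\setminus\{v\}$ is a proper subset of $F$ and is therefore a face of $\Delta$, which is exactly the condition for $F$ to be critical.

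Finally, $F$ is a critical clique of size $|F|\geq i+1$ in the $i$-banner complex $\Delta$, so by the definition of $i$-banner, $F$ must be a face of $\Delta$. This contradicts the assumption that $F$ is a missing face (which by definition is not a face). Hence no such $F$ exists, and $\Delta$ has no missing face of size larger than $i$.

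I do not anticipate any real obstacle here: the argument is a three-line deduction from the definitions of missing face and critical clique together with the $i$-banner condition. The only point requiring a moment's care is the role of the hypothesis $i\geq 2$, which ensures the candidate missing face has size at least $3$ so that the "all pairs are edges" reasoning is available; the suspension of $\partial\Delta^{i-1}$ mentioned just before the lemma shows this converse direction genuinely can fail without the banner hypothesis, so no stronger conclusion should be attempted.
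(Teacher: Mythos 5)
Your proof is correct and is essentially the paper's argument: the paper's one-line proof ("every missing face is a critical clique") is exactly the observation you verify in detail, with the same use of $i\geq 2$ to ensure a purported missing face of size $>i$ has all its pairs as proper subsets (hence edges) and is therefore a critical clique forced to be a face. No gaps; you have just written out the details the paper leaves implicit.
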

\begin{proof}
As every missing face is a critical clique, it follows that
every missing face in $\Delta$ has size at most $i$.
\end{proof}

\section{Examples}\label{section:examples}
\begin{example}\label{nonpure-example}
To construct a $(d-1)$-dimensional complex that is $(i+1)$-banner, but not
$i$-banner, simply take a $(d-1)$-dimensional simplex and
the $(i-1)$-dimensional skeleton of a simplex of
dimension at least $i$,  and glue these two complexes
along one of the $(i-1)$-dimensional faces.
\end{example}
In this section we present a construction of an $(i+1)$-banner {\bf sphere}
of an arbitrary dimension that is not $i$-banner. We start by
constructing $3$-banner spheres that are not flag.

Recall that the {\em stellar subdivision} of a simplicial complex $\Delta$
at a face $\sigma$ (where $\sigma\in\Delta$ and $\dim \sigma>0$) is the
simplicial complex obtained from $\Delta$ by removing all faces containing
$\sigma$ and adding a new vertex $v_\sigma$, as well as all sets of the form
$\tau\cup\{v_\sigma\}$ where $\tau$ does not contain $\sigma$ but
$\tau\cup \sigma\in\Delta$. Observe that if $\sigma$
and $\tau$ are two faces of $\Delta$ and $\dim \sigma>\dim \tau$, then
$\tau$ is also a face of the subdivision of $\Delta$ at $\sigma$.

\begin{proposition}  \label{prop:3-banner}
Let $T$ be the boundary of a triangle, $S$ the boundary of a
$(d-2)$-dimensional simplex, and let $\Delta:= T\ast S$.
Consider the complex $\tilde{\Delta}$ obtained from $\Delta$ by
subdividing all positive-dimensional faces of $\Delta$ except for
the edges of $T$ (starting from top-dimensional
faces and working toward the lower-dimensional ones).
Then $\tilde{\Delta}$ is a $(d-1)$-dimensional simplicial sphere that
is $3$-banner but not flag.
\end{proposition}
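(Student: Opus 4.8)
The plan is to recognize $\tilde\Delta$ as a partial barycentric subdivision of $\Delta=T\ast S$ and to describe its faces explicitly; once that is done, the rest is essentially formal.

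First I would check that $\tilde\Delta$ is a well-defined $(d-1)$-dimensional simplicial sphere. Since $T\cong S^1$ and $S\cong S^{d-3}$, the join $\Delta=T\ast S$ is a simplicial $(d-1)$-sphere. Processing the faces in decreasing order of dimension makes the construction well posed: by the observation recorded just before the proposition, subdividing at a face $\sigma$ destroys only cofaces of $\sigma$, hence leaves intact every face of strictly smaller dimension, and two distinct faces of the same dimension never contain one another; so each face that is to be subdivided is still present when its turn arrives, and the newly created faces are never subdivided. A stellar subdivision does not change the underlying space and preserves the dimension, so $|\tilde\Delta|=|\Delta|\cong S^{d-1}$ and $\dim\tilde\Delta=d-1$.

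The heart of the argument is the following description of $\tilde\Delta$. Call a face of $\Delta$ \emph{subdivided} if it gets subdivided in the construction --- that is, if it has dimension $\ge 1$ and is not one of the three edges of $T$ --- and call the remaining faces of $\Delta$ (the empty set, the vertices, and the edges of $T$) \emph{untouched}. I claim that the faces of $\tilde\Delta$ are exactly the sets
\[
G\cup\{v_{\sigma_1},\dots,v_{\sigma_k}\},\qquad k\ge 0,
\]
where $\sigma_1\subsetneq\cdots\subsetneq\sigma_k$ is a chain of subdivided faces of $\Delta$, $G$ is an untouched face of $\Delta$, and $G\subseteq\sigma_1$ whenever $k\ge 1$. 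This I would prove by induction on the number of subdivision steps, using the description of a single stellar subdivision given in the paper; this is the routine but bookkeeping-heavy part, and I expect it to be the main obstacle. From the description one reads off: (a) two distinct original vertices $u,u'$ span an edge of $\tilde\Delta$ if and only if $\{u,u'\}$ is an edge of $T$ (the only untouched $2$-faces), so $\{a,b,c\}$ is a clique of $\tilde\Delta$; and (b) $\{v_\sigma,v_\tau\}$ is an edge of $\tilde\Delta$ if and only if $\sigma$ and $\tau$ are comparable, while $\{u,v_\sigma\}$ is an edge (for $u$ an original vertex) if and only if $u\in\sigma$.

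Finally I would show that $\{a,b,c\}$ is the \emph{only} clique of $\tilde\Delta$ that is not a face, which immediately gives both conclusions. Let $F=A\sqcup B$ be a clique of $\tilde\Delta$, with $A$ its original vertices and $B$ its barycentre vertices. By (b), $B=\{v_{\tau_1},\dots,v_{\tau_l}\}$ with $\tau_1\subsetneq\cdots\subsetneq\tau_l$ a chain of subdivided faces and with $A\subseteq\tau_1$ when $l\ge1$; by (a), either $|A|\le 1$ or $A\subseteq\{a,b,c\}$. Comparing with the description above, $F$ is a face unless $A$ fails to be untouched, which under these constraints forces $A=\{a,b,c\}$ --- but then $A\subseteq\tau_1\in\Delta$ is impossible since $\{a,b,c\}\notin\Delta$, so $l=0$ and $F=\{a,b,c\}$. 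Consequently $\{a,b,c\}$ is a missing face of $\tilde\Delta$ of size $3$, so $\tilde\Delta$ is not flag, while every clique of $\tilde\Delta$ of size at least $4$ --- in particular every critical one --- is a face, so $\tilde\Delta$ is $3$-banner.
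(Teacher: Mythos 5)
Your proposal is correct and takes essentially the same route as the paper's proof: both get the sphere property from joins and stellar subdivisions, both rest on the same combinatorial description of $\tilde{\Delta}$ (faces $=$ chains of subdivided faces of $\Delta$ together with one untouched face at the bottom --- the paper asserts this bijection without proof, just as you defer its induction), and both conclude that the vertex set of $T$ (your $\{a,b,c\}$, the paper's $\{x,y,z\}$) is the unique clique of $\tilde{\Delta}$ that is not a face. Your extraction of the edge/clique structure from that description is slightly more detailed than the paper's, but the argument is the same.
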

\begin{proof}
That $\tilde{\Delta}$ is a simplicial sphere follows from the
fact that the join of two spheres is a sphere and that stellar
subdivisions do not change the homeomorphism type of a complex.
To see that $\tilde{\Delta}$ is not a flag complex, observe that
the vertices of $T$, which we denote by $x,y,z$,
form a missing face in $\tilde{\Delta}$ of size 3.
Finally, note that the faces of $\tilde{\Delta}$
are in bijection with those chains
in the face poset of $\Delta$ that contain at most 1 element from the list
\[
\{x\}, \{y\}, \{z\}, \{x, y\}, \{x, z\}, \{y, z\},
\]
where a chain of faces
$\sigma_s\supset \sigma_{s-1} \supset \cdots \supset \sigma_1 \supset
\emptyset$
in the face poset of $\Delta$ corresponds to the face
$\{v_{\sigma_s}, \cdots , v_{\sigma_1}\}$ of
$\tilde{\Delta}$ if $\sigma_1$ is not in the above list, and to the face
$\{v_{\sigma_s}, \cdots, v_{\sigma_2}\}\cup \sigma_1$ otherwise.
Hence we conclude that (i) there is no clique in $\tilde{\Delta}$ that
properly contains the clique $C=\{x,y,z\}$, and (ii) every clique $C'\neq C$
in $\tilde{\Delta}$ forms a face of $\tilde{\Delta}$. Thus $C$ is the only
critical clique of $\tilde{\Delta}$ that is not a face, and hence
$\tilde{\Delta}$ is $3$-banner.
\end{proof}

\begin{corollary}
For every $2\leq i \leq d$, there exists a $(d-1)$-dimensional
sphere that is $(i+1)$-banner, but not $i$-banner.
\end{corollary}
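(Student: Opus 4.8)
The plan is to bootstrap from Proposition~\ref{prop:3-banner} using two operations that are already under our control: suspension (Lemma~\ref{lemma:suspension}) and joining with boundaries of simplices. First I would fix $i\ge 2$ and $d\ge i$, and aim to produce a $(d-1)$-dimensional sphere $\Gamma$ that is $(i+1)$-banner but not $i$-banner.

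\medskip

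\noindent The first step is to handle the base case $i=2$ in every relevant dimension. For $d\ge 3$, Proposition~\ref{prop:3-banner} already gives a $(d-1)$-dimensional sphere $\tilde\Delta$ that is $3$-banner but not flag; since ``not flag'' is the same as ``not $2$-banner'' by Lemma~\ref{lemma:flag}, this is exactly a $(d-1)$-sphere that is $(i+1)$-banner but not $i$-banner for $i=2$. (For $d=2$ the statement is vacuous since one needs $i\le d$ and $i\ge 2$ forces $d\ge 2$, and a $1$-sphere that is $3$-banner but not $2$-banner is just a polygon with a missing face of size $3$, which does not arise; but the constraint $2\le i\le d$ with the sphere being $(d-1)$-dimensional means the smallest interesting case is $d=3$, $i=2$, already covered.)

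\medskip

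\noindent The second step raises $i$ while raising $d$ in step. Starting from a $(d'-1)$-sphere $\Gamma'$ that is $(i')$-banner but not $(i'-1)$-banner, I claim $\Sigma\Gamma'$ is a $d'$-sphere that is $(i'+1)$-banner but not $i'$-banner: it is $(i'+1)$-banner by the ``only if'' direction of Lemma~\ref{lemma:suspension}, and it fails to be $i'$-banner because if $\Sigma\Gamma'$ were $i'$-banner then the ``if'' direction of Lemma~\ref{lemma:suspension} (applied with $i$ there equal to $i'-1$) would force $\Gamma'$ to be $(i'-1)$-banner, a contradiction. Iterating this $k$ times turns a $(d_0-1)$-sphere that is $(i_0+1)$-banner-not-$i_0$-banner into a $(d_0+k-1)$-sphere that is $(i_0+k+1)$-banner-not-$(i_0+k)$-banner. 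Combining with the base case $i_0=2$ (and arbitrary $d_0\ge 3$), we obtain, for each $k\ge 0$, $(d-1)$-spheres that are $(i+1)$-banner but not $i$-banner for $i=2+k$ and every $d\ge d_0+k$; letting $d_0$ range over all integers $\ge 3$, we cover all pairs with $i\ge 2$ and $d\ge i$. Since $2\le i\le d$ is exactly the hypothesis, this finishes the corollary.

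\medskip

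\noindent The main obstacle is making sure the dimension bookkeeping actually covers the full range $2\le i\le d$ rather than just $d$ ``large'' relative to $i$: one must check that the base case in Proposition~\ref{prop:3-banner} is available for \emph{every} ambient dimension $d_0\ge 3$ (it is, since $d$ there is a free parameter, with $S$ the boundary of a $(d-2)$-simplex), so that after $k=i-2$ suspensions one reaches the target dimension exactly when $d = d_0 + (i-2)$ with $d_0 = d-i+2\ge 3$, i.e.\ precisely when $d\ge i+1$; the remaining boundary case $d=i$ is then the banner-in-the-sense-of-\cite{BjVor} case, handled by taking $d_0=3$, $i=3$ directly from Proposition~\ref{prop:3-banner} and then suspending, or simply by noting Proposition~\ref{prop:3-banner} with $d_0=3$ gives the $d=i=3$ instance and suspension handles $d=i\ge 4$ after one checks the indices line up.
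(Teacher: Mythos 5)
Your main construction---start from Proposition~\ref{prop:3-banner} and suspend repeatedly, using both directions of Lemma~\ref{lemma:suspension} together with the equivalence ``flag $=$ $2$-banner'' from Lemma~\ref{lemma:flag}---is exactly the paper's argument, and your bookkeeping correctly covers every pair with $d\geq i+1$. The gap is in the boundary case $d=i$, which you dismiss or mishandle. For $d=i=2$ the statement is not vacuous, and the required example does exist: the boundary of a triangle is a $1$-sphere whose vertex set is a missing face of size $3$, so it is not flag (hence not $2$-banner), while it is automatically $3$-banner by Lemma~\ref{lemma:i+1}; your parenthetical claim that such a $1$-sphere ``does not arise'' is false, and the case is part of the corollary. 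For $d=i\geq 3$ your proposed fix also fails: the $2$-dimensional output of Proposition~\ref{prop:3-banner} \emph{is} $3$-banner, so it cannot witness the $d=i=3$ instance, which requires a $2$-sphere that is $4$-banner but \emph{not} $3$-banner; and suspending base spheres of dimension at least $2$ (your restriction $d_0\geq 3$) only ever produces pairs with $d\geq i+1$, never $d=i$.

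The repair is the paper's parametrization: for given $2\leq i\leq d$, take the $(d-i+1)$-dimensional sphere of Proposition~\ref{prop:3-banner} (for $d=i$ this is just the boundary of a triangle, the degenerate instance in which $S$ is the boundary of a $0$-simplex) and suspend it $i-2$ times; by Lemma~\ref{lemma:suspension} the result is a $(d-1)$-sphere that is $(i+1)$-banner but not $i$-banner, with no case left over. In other words, let the dimension of the base sphere go all the way down to $1$ instead of pinning it at $2$ or higher. Alternatively, the case $d=i$ can be settled directly: the boundary of the $d$-simplex is $(d+1)$-banner by Lemma~\ref{lemma:i+1} but not $d$-banner, since its full vertex set is a critical clique of size $d+1$ that is not a face.
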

\begin{proof}
According to Proposition \ref{prop:3-banner}, there exists a $3$-banner
$(d-i+1)$-dimensional simplicial sphere that is not flag. Suspending this
sphere $(i-2)$ times yields, by Lemma~\ref{lemma:suspension}, a
$(d-1)$-sphere that is $(i+1)$-banner, but not $i$-banner.
\end{proof}

\begin{remark}
Bj\"orner and Vorwerk \cite[Example 3.6]{BjVor} construct a
$3$-dimensional sphere that is banner, and hence $3$-banner,
but not flag. Their construction is different from the one
in Proposition~\ref{prop:3-banner} (for instance, it contains
fewer vertices than the complex in our construction). Suspending
this sphere an appropriate number of times produces another family of
$(d-1)$-dimensional spheres that are $(d-1)$-banner, but not
$(d-2)$-banner.
\end{remark}

\section{Cohen-Macaulay connectivity of skeleta}
We now turn to discussing connectivity of $i$-banner complexes.
In particular, our goal in this section is to prove the following theorem:

\begin{theorem} \label{thm:main}
Let $\Delta$ be a $(d-1)$-dimensional homology sphere
 over a field $\mathbf{k}$. If $\Delta$
is $i$-banner for some $1 \leq i \leq d$,
then $\skel_{d-i-j}(\Delta)$ is $2(i+j)$-CM over $\mathbf{k}$ for
all $0\leq j \leq d-i$.
\end{theorem}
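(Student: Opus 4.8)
The plan is to induct on $i$, using suspension and links to reduce to the flag case $i=1$, where the statement is exactly \cite[Theorem 4.1]{GKN}. For the base case $i=1$: by Lemma~\ref{lemma:flag}, $\Delta$ is flag, and a flag homology sphere is doubly CM (as noted in Section~2); so \cite[Theorem 4.1]{GKN} applies to give that $\skel_{d-1-j}(\Delta)$ is $2(j+1)$-CM for $0 \le j \le d-1$, which is precisely the claim with $i=1$. (If $i=2$ the complex is still flag by Lemma~\ref{lemma:flag} and the same argument with the index shift $i=2$, $j' = j+1$ works, so one may also treat $i \le 2$ as the base case.)

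For the inductive step, assume $i \ge 3$ and that the theorem holds for $(i-1)$-banner homology spheres of every dimension. The key observation is that being $q$-CM is equivalent to the statement that deleting any $\le q-1$ vertices leaves a CM complex of full dimension, and this can be tested by removing one vertex at a time and combining with a link/deletion analysis. Concretely, I would show: (a) for every vertex $v$ of $\Delta$, the link $\lk_\Delta(v)$ is an $(i-1)$-banner homology sphere of dimension $d-2$, so by induction $\skel_{(d-1)-(i-1)-j}(\lk_\Delta v) = \skel_{d-i-j}(\lk_\Delta v)$ is $2((i-1)+j) = 2(i+j-1)$-CM; and (b) the deletion $\Delta_{-v}$ is CM of dimension $d-1$ (since $\Delta$ is doubly CM, being a homology sphere). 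One then uses a standard lemma relating the CM-connectivity of a complex to that of its vertex links and deletions — the kind of argument used in \cite{Fl} and \cite{GKN} — together with the Mayer--Vietoris-type decomposition $\Delta = \Star_\Delta(v) \cup \Delta_{-v}$ with $\Star_\Delta(v) \cap \Delta_{-v} = \lk_\Delta(v)$, restricted to the relevant skeleton, to boost the connectivity by $2$ when passing from links to the whole complex. The crucial point making the numerics work is the second part of Lemma~\ref{lemma:link-deletion}: a face of $\Delta$ on the vertices of $\lk_\Delta(v)$ that fails to lie in $\lk_\Delta(v)$ has dimension $\le i-2$, so on $\skel_{d-i-j}(\Delta)$ with $d-i-j \ge i-1 \ge i-2+1$... — one must check that the skeleton we take is high enough that this discrepancy between $\skel_{d-i-j}(\Delta)[V(\lk v)]$ and $\skel_{d-i-j}(\lk_\Delta v)$ does not affect CM-ness; this is where the bound $j \le d-i$ and the precise codimension are used.

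The main obstacle I expect is item (b) combined with the bookkeeping that turns ``links are $2(i+j-1)$-CM'' into ``$\Delta$ is $2(i+j)$-CM'': one needs a clean inductive lemma of the form ``if $\Delta$ is CM and $\Delta_{-W}$ is CM of full dimension and $\lk_\Delta(v)_{-W}$ is CM of full dimension for all suitable small $W$ and all vertices $v$, then $\skel_{d-i-j}(\Delta)$ is $2(i+j)$-CM.'' Establishing such a lemma cleanly, handling the case where the removed vertex set $W$ may or may not contain $v$, and verifying that the skeletal restriction does not lower the dimension (again using $j \le d-i$ so that $\skel_{d-i-j}(\Delta)$ is pure of dimension $d-i-j$, which holds because a homology sphere is pure and CM) is the technical heart. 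The alternative, and perhaps cleaner, route is to combine Lemma~\ref{lemma:suspension} with the flag case: write the $i$-banner sphere $\Delta$ not as a suspension in general, so this does not directly apply — hence I would fall back to the link-induction above, mirroring the structure of the proof of \cite[Theorem 4.1]{GKN} but replacing ``flag'' by ``$i$-banner'' and tracking the $i$-dependent index shift throughout.
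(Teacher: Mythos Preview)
Your inductive scheme has a genuine numerical gap: the Mayer--Vietoris / link-deletion mechanism you invoke boosts CM-connectivity by exactly one, not two. Concretely, by your induction on $i$ the vertex links satisfy $\skel_{d-i-j}(\lk_\Delta v)$ is $2((i-1)+j)=2(i+j)-2$-CM, and the lemma of the type you describe (which is essentially Corollary~\ref{cor:cm-skeleta-links} with $s=1$) then yields only that $\skel_{d-i-j}(\Delta)$ is $(2(i+j)-1)$-CM. There is no way to squeeze a second level out of the star/deletion decomposition: when you peel off the last vertex $v$ from a set $W$ of size $2(i+j)-1$, you need $\tilde{H}_r\bigl((\lk_\Delta v)_{-(W\setminus v)}\bigr)=0$ for $r<d-i-j$ with $|W\setminus v|=2(i+j)-2$, and the inductive hypothesis on the link only guarantees this for $|W\setminus v|\leq 2(i+j)-3$.

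The missing idea, and the one the paper uses, is Alexander duality. The paper inducts on $d$ (not $i$) and splits the vanishing $\tilde{H}_r(\Delta_{-W})=0$ into two cases. If $W$ is not a clique, one can pick $u,u'\in W$ with $\{u,u'\}\notin\Delta$ and run the Mayer--Vietoris argument exactly as in \cite{GKN}. If $W$ \emph{is} a clique, then since $\Delta$ is a $(d-1)$-dimensional homology sphere one has $\tilde{H}_r(\Delta_{-W})\cong \tilde{H}^{\,d-r-2}(\Delta[W])$; the $i$-banner hypothesis forces $\dim\Delta[W]\leq i-2$ whenever $W$ is a clique that is not a face (no $i$-subset of $W$ can be a face, else $W$ would be a critical clique of size $\geq i+1$), while if $W$ is a face then $\Delta[W]$ is a simplex. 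Either way $\tilde{H}^{\,d-r-2}(\Delta[W])=0$ for $r<d-i-j$. This duality step is where the homology-sphere hypothesis is genuinely used and is what recovers the ``extra'' level of connectivity your approach loses; the second part of Lemma~\ref{lemma:link-deletion}, which you correctly flag as relevant, enters only indirectly through the dimension bound on $\Delta[W]$.
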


We begin with a lemma. (See Remark \ref{remark:h} for a
stronger statement on the face numbers of $(d-1)$-dimensional
2-CM complexes that are $d$-banner.)

\begin{lemma} \label{lemma:gorenstein-num-verts}
Let $\Delta$ be a $(d-1)$-dimensional homology manifold
 over a field $\mathbf{k}$, and suppose that $\Delta$
is $i$-banner for some $1 \leq i \leq d$.  Then $\Delta$ has at
least $2d$ vertices.
\end{lemma}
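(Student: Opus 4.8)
The plan is to adapt the classical argument that a flag $(d-1)$-dimensional homology manifold has at least $2d$ vertices, the extremal example being the boundary of the $d$-dimensional cross-polytope. We may assume $d\geq 2$: when $d=1$ the complex $\Delta$ is $0$-dimensional, hence $2$-Buchsbaum (as noted in the Preliminaries), so deleting any one vertex must leave a $0$-dimensional complex, forcing $\Delta$ to have at least $2=2d$ vertices. Now assume $d\geq 2$. Since $\Delta$ is a homology manifold it is pure, so we may fix a facet $F=\{v_1,\dots,v_d\}$. For each $k$, the set $G_k:=F\setminus\{v_k\}$ is a nonempty face of size $d-1$, so $\lk_\Delta(G_k)$ is a homology sphere whose dimension is $(d-1)-(d-1)=0$; a $0$-dimensional homology sphere consists of exactly two vertices. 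One of these two vertices is $v_k$ (because $G_k\cup\{v_k\}=F\in\Delta$ and $v_k\notin G_k$), and we let $v_k'$ denote the other one, so that $v_k'\neq v_k$, $v_k'\notin G_k$, and $(F\setminus\{v_k\})\cup\{v_k'\}\in\Delta$.

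I then claim that the $2d$ vertices $v_1,\dots,v_d,v_1',\dots,v_d'$ are pairwise distinct, which immediately gives the bound. That $v_k'\neq v_k$ is built into the construction, and $v_k'\notin F$ follows since $v_k'\notin G_k=F\setminus\{v_k\}$. It remains to rule out $v_k'=v_l'$ for $k\neq l$, and this is exactly where the $i$-banner hypothesis is used. Set $w:=v_k'=v_l'$ and consider $T:=F\cup\{w\}$. I claim $T$ is a clique: every pair inside $F$ is an edge because $F\in\Delta$; a pair $\{v_j,w\}$ with $j\neq k$ is contained in the face $(F\setminus\{v_k\})\cup\{w\}$, hence an edge; and $\{v_k,w\}$ is contained in the face $(F\setminus\{v_l\})\cup\{w\}$ (using $k\neq l$), hence an edge. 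So $T$ is a clique of size $d+1$. Moreover $T\setminus\{w\}=F\in\Delta$, so $T$ is a \emph{critical} clique, of size $d+1\geq i+1$ since $i\leq d$. As $\Delta$ is $i$-banner, $T$ must be a face of $\Delta$; but $|T|=d+1>d=\dim\Delta+1$, a contradiction. Hence all $2d$ listed vertices are distinct.

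The main obstacle is this final step: observing that a coincidence among the ``antipodes'' $v_k'$ forces the entire set $F\cup\{w\}$ to be a critical clique. The verification that $F\cup\{w\}$ is a clique is elementary, but it relies on the two witnessing facets $(F\setminus\{v_k\})\cup\{w\}$ and $(F\setminus\{v_l\})\cup\{w\}$ jointly supplying every missing edge, and it is precisely the fact that this critical clique has size $d+1\geq i+1$ — rather than merely forming a triangle, as in the flag case — that makes the argument go through uniformly for all $i$-banner complexes. (In the write-up I would also recall the standard fact that in a homology manifold the link of every nonempty face is a homology sphere, which is what legitimizes the computation of $\lk_\Delta(G_k)$ above.)
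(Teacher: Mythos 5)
Your proof is correct, but it takes a genuinely different route from the paper's. The paper argues by induction on $d$: it first shows the graph of $\Delta$ is not a clique (otherwise a facet $F$ together with a vertex $v\notin F$ would form a critical clique of size $d+1$, hence a face, contradicting $\dim\Delta=d-1$), then picks a non-edge $\{u,u'\}$ and applies the inductive hypothesis to $\lk_\Delta(u)$, which is a $(d-2)$-dimensional homology sphere that is $(i-1)$-banner by Lemma \ref{lemma:link-deletion}, giving $2(d-1)$ vertices in the link plus $u$ and $u'$. You instead argue directly and non-inductively: fixing a facet $F=\{v_1,\dots,v_d\}$, you use the (correct, standard, and easily checked from the paper's definitions) fact that links of nonempty faces of a homology manifold are homology spheres, so each $\lk_\Delta(F\setminus\{v_k\})$ is a homology $0$-sphere with exactly two vertices, producing an ``antipode'' $v_k'\notin F$; a coincidence $v_k'=v_l'=w$ would make $F\cup\{w\}$ a critical clique of size $d+1\geq i+1$, hence a face, which is impossible. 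Interestingly, both proofs exploit the banner hypothesis through the same mechanism (a facet plus one extra vertex forming a critical $(d+1)$-clique that cannot be a face), but your cross-polytope-style count avoids induction entirely and exhibits $2d$ explicit distinct vertices, whereas the paper's induction dovetails with the inductive structure of its main theorem; your separate treatment of $d=1$ via the $2$-Buchsbaum remark in the Preliminaries is consistent with the paper's conventions (the paper's own proof only starts its induction at $d=2$). The distinctness verifications ($v_k'\neq v_j$ since link vertices avoid $F\setminus\{v_k\}$, and the two witnessing faces $(F\setminus\{v_k\})\cup\{w\}$, $(F\setminus\{v_l\})\cup\{w\}$ supplying all edges of the clique) are all sound.
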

\begin{proof}
The proof is by induction on $d$.  The claim holds when
$d=2$ since a $1$-dimensional homology manifold $\Delta$
is a graph that is a cycle or a disjoint
union of cycles, but not a triangle (since $i\leq 2$ and hence
$\Delta$ is flag).
Suppose now that $d \geq 3$.

First we claim that the graph of $\Delta$ is not a clique.
To see this, let $F$ be a facet of $\Delta$ and let $v$ be a vertex
that does not belong to $F$.  Such a vertex exists
since $\Delta$ is not a simplex, and hence has at least $d+1$
vertices.  If the graph of $\Delta$ is a clique, then $F \cup \{v\}$ is a
critical $(d+1)$-clique, and thus a face of $\Delta$.
This contradicts the assumption that $\Delta$ is $(d-1)$-dimensional.

Since the graph of $\Delta$ is not a clique, there exist vertices
$u$ and $u'$ such that $\{u,u'\}$ is not an edge in $\Delta$.
Since the link of $u$ is an $(i-1)$-banner, $(d-2)$-dimensional homology
sphere, it has at least $2(d-1)$ vertices by our inductive
hypothesis.  These vertices, together with $u$ and $u'$ account for
at least $2d$ vertices in $\Delta$.
\end{proof}

We are now ready to prove {\em Theorem \ref{thm:main}}. Our proof follows
the same general outline as the proof of \cite[Theorem 4.1]{GKN}.

\begin{proof} 
We begin by establishing notation that will be used throughout the proof.
We will assume that all homology groups are computed with coefficients in
$\mathbf{k}$, and the field will be suppressed from our notation.
Similarly, when we say that a simplicial complex is CM
(respectively $q$-CM), we mean that it is CM over
$\mathbf{k}$ (resp. $q$-CM over $\mathbf{k}$).  Finally, suppose $\Gamma$
is a subcomplex of $\Delta$, and let $W$ be a subset of $V(\Delta)$
(but not necessarily a subset of the vertices of $\Gamma$).
We write $\Gamma_{-W}$ to denote the restriction of $\Gamma$ to the vertices
in $V(\Gamma) \setminus W$.  We will also make use of the following
observation:
\begin{equation}  \label{eq:lk-skel}
\lk_{(\skel_{k}(\Delta))_{-W}}(F) = (\lk_{\skel_k(\Delta)}(F))_{-W} =
(\skel_{k-|F|}(\lk_{\Delta}(F)))_{-W}.
\end{equation}

The proof is by induction on $d$.  Theorem 4.1 in \cite{GKN}
verifies an analogous result for flag complexes. Hence the result
holds when $i\leq2$; this also implies that it holds when $d=2$.

By Reisner's criterion \cite{Reis}, together with the fact that the
$p$-dimensional homology groups of a simplicial complex are determined
by its $(p+1)$-skeleton, we must establish the following three statements
in order to show that $\skel_{d-i-j}(\Delta)$ is $2(i+j)$-CM.
\begin{enumerate}
\item[(A). ] $\Delta_{ - W}$ is at least $(d-i-j)$-dimensional for any
$W \subseteq V(\Delta)$ with $|W| < 2(i+j)$.
\item[(B). ] $\lk_{\skel_{d-i-j}(\Delta)}(F)$ is $2(i+j)$-CM for
any nonempty face $F \in \Delta$.
\item[(C). ] $\widetilde{H}_r(\Delta_{-W}) = 0$ for all $r < d-i-j$ for
any $W \subseteq V(\Delta)$ with $|W| < 2(i+j)$.
\end{enumerate}

First we prove claim (A).  Let $G$ be a maximal face of $\Delta_{-W}$ and
suppose $|G| \leq d-i-j$.  By Lemma \ref{lemma:gorenstein-num-verts},
$\lk_{\Delta}(G)$ has at least $2(d-|G|)$ vertices.  Since
\[
2(d-|G|) \geq 2(d-(d-i-j)) = 2(i+j) > |W|,
\]
there is a vertex of $\lk_{\Delta}(G)$ that does not belong to $W$.
This contradicts the assumption that $G$ is maximal in $\Delta_{-W}$.

Next, we prove claim (B).  We consider two cases based on $|F|$.
If $|F| < i-2$, then $\lk_{\Delta}(F)$ is $(i-|F|)$-banner
by Lemma \ref{lemma:link-deletion}.  Since $\lk_{\Delta}(F)$ is also a
$(d-|F|-1)$-dimensional homology sphere, our inductive hypothesis implies
that $\lk_{\skel_{d-i-j}(\Delta)}(F) = \skel_{(d-|F|)-i-j}(\lk_{\Delta}(F))$
is $2(i+j)$-CM. On the other hand, if $|F| \geq i-2$, then
Lemmas \ref{lemma:flag} and \ref{lemma:link-deletion} imply that
$\lk_{\Delta}(F)$ is a flag homology sphere.  Thus by \cite[Theorem
4.1]{GKN}, $\lk_{\skel_{d-i-j}(\Delta)}(F)$ is $2(i+j)$-CM.

Finally, we prove claim (C). Again, we must consider two cases based
on whether or not the vertices in $W$ form a clique.  In the case that
the vertices of $W$ do not form a clique, the proof is identical to that
of \cite[Theorem 4.1]{GKN}. (It relies on eq.~(\ref{eq:lk-skel}) and a
simple Mayer--Vietoris argument.) So suppose the vertices in $W$ do form a
clique. We claim that $\widetilde{H}^k(\Delta[W]) = 0$ for any $k \geq i-1$.
If $W$ is not a face of $\Delta$, then it contains no critical cliques
of size at least $i+1$.  This means that no $i$ vertices of $W$ form a face
of $\Delta$, and hence $\Delta[W]$ has dimension at most $i-2$.
Thus, indeed we have $\widetilde{H}^k(\Delta[W]) = 0$ for any $k \geq i-1$.
On the other hand, if $W$ is a face of $\Delta$, then $\Delta[W]$ is a
simplex, and $\widetilde{H}^k(\Delta[W]) = 0$ for all $k$.

Since $\Delta$ is a $(d-1)$-dimensional homology sphere,
Alexander duality implies that
\[
\widetilde{H}_r(\Delta_{-W}) \cong \widetilde{H}^{d-r-2}(\Delta[W])
\quad \mbox{for all } r. \]
If $r < d-i-j$, then
$d-r-2 \geq i+j-1 \geq i-1$, and we conclude that
$\widetilde{H}_r(\Delta_{-W}) = 0$ for all such $r$.
\end{proof}

\begin{corollary} \label{cor:main}
Let $\Delta$ be a $d$-dimensional homology manifold
 over a field $\mathbf{k}$. If $\Delta$
is $(i+1)$-banner for some $1 \leq i \leq d$,
then $\skel_{d+1-i-j}(\Delta)$ is $2(i+j)$-Buchsbaum over $\mathbf{k}$ for
all $0 \leq j \leq d+1-i$. Moreover, if $\Delta$ is also connected, then
$\Delta_{-W}$ is connected for any $W \subseteq V(\Delta)$
with $|W| < 2(i+j)$.
\end{corollary}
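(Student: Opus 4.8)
The plan is to reduce the Buchsbaum statement to the Cohen--Macaulay statement of Theorem \ref{thm:main} by passing to vertex links, exactly as one would deduce a Buchsbaum fact about a homology manifold from a known Cohen--Macaulay fact about homology spheres. Concretely, recall that a complex is $q$-Buchsbaum precisely when, after deleting any set $W$ of fewer than $q-1$ vertices, the resulting complex is pure of the correct dimension and all of its vertex links are CM. So I would fix $W \subseteq V(\Delta)$ with $|W| < 2(i+j)-1$, set $\Gamma := \skel_{d+1-i-j}(\Delta)$, and aim to show $\Gamma_{-W}$ is pure $(d+1-i-j)$-dimensional with all vertex links CM. For a vertex $v \notin W$, the link $\lk_\Delta(v)$ is a $(d-1)$-dimensional homology sphere over $\mathbf{k}$ (this is the defining property of a homology manifold), and by Lemma \ref{lemma:link-deletion} it is $i$-banner (since $\Delta$ is $(i+1)$-banner and $i+1 \geq 2$). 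Using the skeleton identity \eqref{eq:lk-skel}, namely $\lk_{\Gamma_{-W}}(v) = (\skel_{d-i-j}(\lk_\Delta(v)))_{-W}$, Theorem \ref{thm:main} applied to $\lk_\Delta(v)$ (with the same $i$ and $j$, noting $0 \le j \le d-i$ is exactly the range needed once one checks the boundary case $j = d+1-i$ separately) tells us $\skel_{d-i-j}(\lk_\Delta(v))$ is $2(i+j)$-CM over $\mathbf{k}$, hence deleting the at most $2(i+j)-2 < 2(i+j)-1$ vertices of $W$ still leaves a CM complex of the correct dimension. That gives the CM-ness of all vertex links of $\Gamma_{-W}$.

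For purity and the correct dimension of $\Gamma_{-W}$, I would argue as in claim (A) of the proof of Theorem \ref{thm:main}: if $G$ is a facet of $\Delta_{-W}$ with $|G| \le d+1-i-j$, then $\lk_\Delta(G)$ is a homology sphere of dimension $d - |G|$ (a link of a face in a homology manifold is a homology sphere once the face is nonempty, and more carefully one uses that links of nonempty faces are homology spheres in a homology manifold), so by Lemma \ref{lemma:gorenstein-num-verts} applied to $\lk_\Delta(G)$ — which is $(i-|G|+1)$-banner or flag — it has at least $2(d+1-|G|)$ vertices; since $2(d+1-|G|) \ge 2(i+j) > |W|$ there is a vertex of $\lk_\Delta(G)$ outside $W$, contradicting maximality of $G$. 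Hence $\Delta_{-W}$, and therefore $\Gamma_{-W} = \skel_{d+1-i-j}(\Delta_{-W})$, is $(d+1-i-j)$-dimensional, and purity of the skeleton then reduces to the fact that its vertex links (shown CM above) are pure of the right dimension.

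For the final "moreover" clause, when $\Delta$ is connected I would take $j$ as large as the statement allows so that $2(i+j)$ is maximal, and for a fixed $W$ with $|W| < 2(i+j)$ show $\widetilde H_0(\Delta_{-W}) = 0$. Following claim (C) of the proof of Theorem \ref{thm:main}, I would split on whether $W$ is a clique: if $W$ is not a clique one runs the same Mayer--Vietoris argument over a non-edge of $W$, peeling off two vertices and inducting; if $W$ is a clique, then $W$ either fails to be a face (whence $\Delta[W]$ has dimension at most $i-2$, so its top cohomology in the Alexander-dual degree vanishes) or is a face (whence $\Delta[W]$ is a simplex), and in either case Alexander duality $\widetilde H_0(\Delta_{-W}) \cong \widetilde H^{d-2}(\Delta[W])$ — valid since the link of every vertex is a homology sphere and $\Delta$ is connected, so $\Delta$ behaves cohomologically like a sphere in top-ish degrees for this argument — gives the vanishing, using $d - 2 \ge i+j-1 \ge i-1$. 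The main obstacle I anticipate is precisely the bookkeeping at the degenerate ends of the parameter range (the case $j = d+1-i$, and making sure the Alexander-duality step is legitimately available for a homology manifold rather than a homology sphere, which may require restricting to the connected case or invoking the long exact sequence of the pair relative to a vertex star); everything else is a routine transfer of the sphere-level results to links.
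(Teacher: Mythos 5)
Your first half is essentially the paper's own proof of the Buchsbaum statement: the vertex links of $\Delta$ are $(d-1)$-dimensional homology spheres that are $i$-banner by Lemma~\ref{lemma:link-deletion}, Theorem~\ref{thm:main} together with eq.~(\ref{eq:lk-skel}) handles $0\le j\le d-i$, and the extreme case $j=d+1-i$ reduces to the vertex count of Lemma~\ref{lemma:gorenstein-num-verts}; your purity argument is the same counting as in claim~(A). (One slip: $q$-Buchsbaum allows deleting any $W$ with $|W|\le q-1$, not $|W|<q-1$; your argument in fact covers the full range, since a $2(i+j)$-CM complex tolerates $2(i+j)-1$ deleted vertices.)

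The genuine gap is in the ``moreover'' clause, where you transplant claim~(C) of Theorem~\ref{thm:main} to $\Delta$ itself; both branches of that argument use that $\Delta$ is a homology \emph{sphere}, and neither survives the passage to homology manifolds. In the clique case you invoke Alexander duality $\widetilde H_0(\Delta_{-W})\cong\widetilde H^{d-2}(\Delta[W])$: this is not available for a homology manifold (for a closed $\mathbf{k}$-orientable one the correct statement is Lefschetz duality $\widetilde H_r(\Delta_{-W})\cong H^{d-r}(\Delta,\Delta[W])$, which brings in $H^{d}(\Delta)\neq 0$, and without $\mathbf{k}$-orientability, e.g.\ $\mathbb{R}P^2$ with $\mathrm{char}\,\mathbf{k}\neq 2$, even that fails); you flag the issue yourself but offer no working repair, and the dual degree for $r=0$ would in any case be $d-1$, not $d-2$. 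The non-clique branch is equally problematic: in your decomposition the set you want to prove connected is the \emph{intersection}, so the Mayer--Vietoris sequence only gives $\widetilde H_0(\Delta_{-W})=0$ if the connecting map out of $\widetilde H_1$ of a smaller deletion of $\Delta$ vanishes; in the sphere case this is supplied by the higher vanishing being proved simultaneously, but for a homology manifold these groups are genuinely nonzero (already $\widetilde H_1(\Delta)\neq 0$ for a torus with $W=\emptyset$), and connectivity of the two pieces and their union does not force connectivity of their intersection. The paper avoids all $\Delta$-level duality: it inducts on $|W|$, writes $\Delta_{-W'}=\Delta_{-W}\cup\Star_{\Delta_{-W'}}(v_k)$ with $W'=W\setminus\{v_k\}$, and observes that the intersection is $(\lk_\Delta v_k)_{-W'}$, a restriction of (a skeleton of) an $i$-banner homology sphere, whose connectivity is exactly what Theorem~\ref{thm:main} supplies; the tail of the Mayer--Vietoris sequence then yields $\widetilde H_0(\Delta_{-W})=0$ with no $\widetilde H_1$ input. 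So the fix is to arrange the decomposition so that the link, rather than a deletion of $\Delta$, plays the role of the intersection; your concern about the boundary values of $j$ is legitimate but secondary to this.
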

\begin{proof}
Since $\Delta$ is a $d$-dimensional homology manifold,
all vertex links of $\Delta$ are $(d-1)$-dimensional
homology spheres. Furthermore, since $\Delta$
is $(i+1)$-banner, all vertex links of $\Delta$ are $i$-banner.
Thus, according to Theorem \ref{thm:main},
 $\skel_{d-(i+j)}(\lk v)$ is $2(i+j)$-CM for every vertex $v$ and
all $0\leq j \leq d-i$. Eq.~(\ref{eq:lk-skel}) then completes the proof
for $0\leq j \leq d-i$. Finally, for $j=d+1-i$, the first part follows from
Lemma \ref{lemma:gorenstein-num-verts}.

To prove the second part, let $W = \{v_1,\ldots,v_k\}$ with $k < 2(i+j)$. We
prove that $\Delta_{-W}$ is connected by induction on $k$.  Since we assumed
that $\Delta$ is connected, the result holds when
$k=0$, and we may assume that $k>0$.  Let $W' = W - \{v_k\}$.  The
decomposition $\Delta_{-W'} = \Delta_{-W} \cup \Star_{\Delta_{-W'}}(v_k)$
gives a Mayer-Vietoris sequence
\begin{displaymath}
\cdots \rightarrow \widetilde{H}_0(\lk_{\Delta_{-W'}}(v_k)) \rightarrow
\widetilde{H}_0(\Delta_{-W}) \oplus
\widetilde{H}_0(\Star_{\Delta_{-W'}}(v_k)) \rightarrow
\widetilde{H}_0(\Delta_{-W'}) \rightarrow 0.
\end{displaymath}
By our inductive hypothesis on $k$, $\Delta_{-W'}$ is connected, and
$\lk_{\Delta_{-W'}}(v_k) = (\lk_{\Delta}(v_k))_{-W'}$ is connected
since $\lk_{\Delta}(v_k)$ is $2(i+j)$-CM.  Thus by exactness,
$\Delta_{-W}$ is connected as well.
\end{proof}

\section{An extension of banner connectivity}
In this section we discuss an extension of Theorem \ref{thm:main}
to arbitrary homology spheres. This requires defining the
following family of invariants (cf., \cite[Definition 5.1]{BjVor}).

\begin{definition} \label{defn:b_i}
Let $\Delta$ be a simplicial complex of dimension $d-1$.
For $0\leq i \leq d-1$, define
\begin{eqnarray*}
&&b_i(\Delta)=b_i :=\\
&&\min\{s \ : \  \lk_{\Delta}(F) \text{ is $(d-s-i)$-banner
 or }  \dim\lk_{\Delta}(F)=i
\text{ for all $F \in \Delta$ with $|F| = s$}\}.
\end{eqnarray*}
\end{definition}

\noindent Thus, $0\leq b_i(\Delta)\leq d-i-1$ and
$b_0\geq b_1\geq \cdots\geq b_{d-1}=0$ (this follows from
Lemma \ref{lemma:i+1}).
 If $\Delta$ is the boundary of a $d$-simplex, then
$b_i=d-i-1$ for all $i$. On the other hand, $b_i(\Delta)=0$
if and only if $i=d-1$ or $\Delta$ is $(d-i)$-banner.
We also note that $b_1(\Delta)$ coincides with
the invariant $b_\Delta$ of \cite[Definition 5.1]{BjVor}.

Below is the main result of this section. For
$i=1$ it reduces to \cite[Theorem 1.1]{BjVor} for the
case of homology spheres.

\begin{theorem}\label{thm:banner-connectivity}
Let $\Delta$ be a $(d-1)$-dimensional homology sphere over a
field $\mathbf{k}$. Then
$\skel_i(\Delta)$ is $(2(d-i)-b_i)$-CM over $\mathbf{k}$
for all $0\leq i \leq d-1$.
Moreover, if for a certain $i$, $b_i<d-i-1$, then
$\skel_k(\Delta)$ is $(2(d-k)-b_i)$-CM over $\mathbf{k}$
for all $0\leq k \leq i$.
\end{theorem}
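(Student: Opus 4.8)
The plan is to reduce Theorem~\ref{thm:banner-connectivity} to Theorem~\ref{thm:main} by ``peeling off'' the low-dimensional links that witness the value of $b_i$. Set $b := b_i(\Delta)$. The defining property of $b$ is that every face $F$ of size exactly $b$ has a link that is either $(d-b-i)$-banner or has dimension exactly $i$; in either case, $\skel_i(\lk_\Delta F)$ is $2(d-b-i+(\text{something}))$-CM by Theorem~\ref{thm:main} applied to the $(d-b-1)$-dimensional homology sphere $\lk_\Delta F$ (when $\dim \lk_\Delta F = i$ the skeleton is the whole link, which is doubly CM, but in fact one wants $2(d-b-i)$-CM, and this follows from the $(d-b-i)$-banner alternative being vacuously strong enough or directly from Theorem~\ref{thm:main} with $j$ chosen so that $d-b-i-j = \dim \lk_\Delta F$). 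So I would first record the clean statement: for every $F \in \Delta$ with $|F| = b$, the complex $\skel_{i-|F|+\text{(appropriate shift)}}(\lk_\Delta F)$ is $2(d-b-i)$-CM, using eq.~(\ref{eq:lk-skel}) to translate between skeleta of links and links in skeleta.

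The second step is an induction on $b$, or equivalently a ``skeleton of skeleta'' argument analogous to Fl{\o}ystad's theorem. The key combinatorial input is that a $q$-CM statement about $\skel_i(\Delta)$ can be verified via Reisner's criterion by checking (A) a dimension bound on $\Delta_{-W}$ for $|W| < q$, (B) that the links $\lk_{\skel_i(\Delta)}(F) = \skel_{i-|F|}(\lk_\Delta F)$ (restricted appropriately) are again $q$-CM for all nonempty $F$, and (C) vanishing of $\widetilde H_r(\Delta_{-W})$ for $r < i$ and $|W| < q$ with $q = 2(d-i) - b$. For (B) I would split on $|F| \leq b$ versus $|F| > b$: when $|F| = b$ the link is handled by the first step, and when $|F| > b$ one has $b_{i-|F|}(\lk_\Delta F) \leq b - |F| + b = \dots$ — more precisely, passing to the link of a vertex decreases all the relevant parameters, so the inductive hypothesis (in $d$, or in $b$) applies to $\lk_\Delta v$, which is a $(d-2)$-dimensional homology sphere with $b_{i-1}(\lk_\Delta v) \leq b_i(\Delta)$ by an argument like the one after Definition~\ref{defn:b_i}. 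For (C), the Mayer--Vietoris / Alexander duality machinery from the proof of Theorem~\ref{thm:main} carries over: if $W$ is not a clique one peels off a star and uses exactness together with eq.~(\ref{eq:lk-skel}); if $W$ is a clique, Alexander duality turns $\widetilde H_r(\Delta_{-W})$ into $\widetilde H^{d-r-2}(\Delta[W])$, and one needs $\Delta[W]$ to have small cohomology in the relevant range, which again follows from the $i$-banner-type control on links of large faces together with the bound on missing-face sizes (Lemma~\ref{lemma:missing-faces} applied inside links).

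For the ``moreover'' clause, suppose $b_i < d-i-1$ for a particular $i$. I would show that this forces $b_k \leq b_i$ for all $k \le i$, or more directly, that $\skel_k(\Delta)$ for $k \le i$ is $(2(d-k)-b_i)$-CM by a downward induction on $k$ using the standard fact (Fl{\o}ystad-type) that if $\skel_{k+1}(\Delta)$ is $q$-CM and the relevant links behave, then $\skel_k(\Delta)$ is $(q+2)$-CM — the ``$+2$'' per codimension step is exactly what produces $2(d-k)-b_i$ from $2(d-i)-b_i$. The point where one uses $b_i < d-i-1$ rather than just $b_i \le d-i-1$ is in making sure the dimension count in step (A) is strict enough that $\Delta_{-W}$ still has a face of dimension $k$; when $b_i = d-i-1$ (the simplex-boundary case) no improvement over the trivial bound is available, which is why the hypothesis is needed.

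The main obstacle I expect is step (B) in the case $b < |F| \le$ something: controlling $b_{i-|F|}$ of iterated links and verifying that the inductive hypothesis genuinely applies with the right CM-parameter, since the invariant $b_\bullet$ interacts with taking links in a slightly delicate way (the bound $b_{i-1}(\lk v) \le b_i(\Delta)$ needs a careful proof from the definition, using that a critical clique in a link extends to a critical clique in $\Delta$ by adjoining $v$, exactly as in Lemma~\ref{lemma:link-deletion}). Getting the bookkeeping of indices in eq.~(\ref{eq:lk-skel}) consistent across the two inductions (on $d$ and on $b$) will be the fussy part, but no genuinely new idea beyond the proof of Theorem~\ref{thm:main} should be required.
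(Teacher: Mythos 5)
Your first step is exactly the paper's reduction: when $b_i<d-i-1$, every $F$ with $|F|=b_i$ has $\lk_\Delta F$ a $(d-b_i-1)$-dimensional homology sphere that is $(d-b_i-i)$-banner, and Theorem~\ref{thm:main} makes $\skel_k(\lk_\Delta F)$ into a $2(d-b_i-k)$-CM complex for all $0\le k\le i$. The gap is in your second step, where you transfer this to $\skel_i(\Delta)$ by re-running the Reisner-criterion verification (A)--(C) from the proof of Theorem~\ref{thm:main} on $\Delta$ itself. In the clique case of (C) you need $\widetilde H^{m}(\Delta[W])=0$ for $m\ge d-i-1$, and you assert this ``follows from the $i$-banner-type control on links of large faces together with the bound on missing-face sizes.'' It does not: $\Delta$ itself is not assumed banner, only the links of its $b_i$-faces are, and a clique $W$ of $\Delta$ containing a face $F$ does not restrict to a clique of $\lk_\Delta F$ (pairs $\{w,w'\}\subseteq W\setminus F$ are edges of $\Delta$ but need not be edges of the link), so the banner property of the links gives no dimension bound on $\Delta[W]$; indeed $\Delta[W]$ can contain faces of dimension up to $d-1$ when $|W|<2(d-i)-b_i$ exceeds $d$. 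The paper avoids this entirely: it proves Lemma~\ref{lemma:CM-skeleta} (a CM complex whose vertex links are $q$-CM has $(q+1)$-CM codimension-one skeleton) by a pure Mayer--Vietoris argument with no Alexander duality, iterates it into Corollary~\ref{cor:cm-skeleta-links} ($\skel_k(\lk_\Delta F)$ $q$-CM for all $|F|=s$ implies $\skel_k(\Delta)$ is $(q+s)$-CM), and so confines the Alexander-duality/clique analysis to the links of $b_i$-faces, where bannerness actually holds and Theorem~\ref{thm:main} applies.

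Your treatment of the ``moreover'' clause has a second problem. The suggestion that $b_i<d-i-1$ ``forces $b_k\le b_i$ for all $k\le i$'' contradicts the monotonicity noted after Definition~\ref{defn:b_i}: $b_0\ge b_1\ge\cdots\ge b_{d-1}$, so $b_k\ge b_i$ for $k\le i$, and applying the main statement at level $k$ would only give the weaker bound $2(d-k)-b_k$. The alternative you invoke --- a ``standard Fl{\o}ystad-type fact'' that passing to the codimension-one skeleton gains $2$ in CM-connectivity --- is not a standard fact and is false in general (Fl{\o}ystad gives $+1$; for the boundary of a simplex $+2$ per step fails), and the hedge ``and the relevant links behave'' is precisely the content that needs proving. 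In the paper both the main statement and the ``moreover'' come out of the same application of Corollary~\ref{cor:cm-skeleta-links}: Theorem~\ref{thm:main} already gives $2(d-b_i-k)$-CM skeleta of the links of $b_i$-faces for \emph{every} $0\le k\le i$, and adding $s=b_i$ yields $(2(d-k)-b_i)$-CM for all such $k$ at once; the degenerate case $b_i=d-i-1$, which your write-up leaves implicit, is handled separately by $2$-CM-ness of $\Delta$ plus Fl{\o}ystad's theorem to get the claimed $(d-i+1)$-CM bound.
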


The proof of Theorem \ref{thm:banner-connectivity}
relies on the following lemma.

\begin{lemma}\label{lemma:CM-skeleta}
Let $\Delta$ be a Cohen-Macaulay complex of dimension $d-1$,
and suppose that $\lk_{\Delta}(v)$ is $q$-CM for every vertex
$v \in \Delta$. Then $\skel_{d-2}(\Delta)$ is $(q+1)$-CM.
\end{lemma}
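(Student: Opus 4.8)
The plan is to verify the two defining conditions for $(q+1)$-CM-ness of $\skel_{d-2}(\Delta)$ directly via Reisner's criterion: namely, that for every $W\subseteq V(\Delta)$ with $|W|\leq q$, the complex $(\skel_{d-2}(\Delta))_{-W}$ is Cohen--Macaulay and has dimension exactly $d-2$. First I would dispose of the dimension claim: since $\Delta$ is CM of dimension $d-1$, it is in particular pure, and a simple counting/link argument (of the type used in claim (A) of the proof of Theorem \ref{thm:main}) shows that deleting $q\le$ (number of vertices in any facet's link complement) vertices cannot kill all top-dimensional faces, so $\Delta_{-W}$ still has a facet of dimension $d-1$, whence $(\skel_{d-2}(\Delta))_{-W}$ has dimension $d-2$. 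Actually, cleaner: $\lk_\Delta(v)$ being $q$-CM of dimension $d-2$ forces $\lk_\Delta(v)$ to have at least $q+1$ vertices, which combined with $\Delta$ CM gives enough room; I would work out the exact bookkeeping here.

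The heart of the argument is Cohen--Macaulayness of $(\skel_{d-2}(\Delta))_{-W}$. Again by Reisner, it suffices to show (i) for every nonempty face $F$ of this complex, $\lk_{(\skel_{d-2}(\Delta))_{-W}}(F)$ is CM (equivalently, the analogue of condition (B)), and (ii) the reduced homology $\widetilde H_r((\skel_{d-2}(\Delta))_{-W})$ vanishes for all $r<d-2$. For (i) I would use the identity $\lk_{(\skel_{d-2}(\Delta))_{-W}}(F) = (\skel_{d-2-|F|}(\lk_\Delta(F)))_{-W}$ (exactly eq.~(\ref{eq:lk-skel}) in the paper, with $k=d-2$). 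When $|F|=1$, say $F=\{v\}$, this is $(\skel_{d-3}(\lk_\Delta(v)))_{-W}$; since $\lk_\Delta(v)$ is $q$-CM of dimension $d-2$ and $|W|\le q$, the complex $(\lk_\Delta(v))_{-W}$ is CM of dimension $d-2$, and by Fl\o ystad's theorem \cite{Fl} its codimension-$1$ skeleton $\skel_{d-3}$ is CM (in fact $2$-CM); one must be slightly careful that we delete $W$ first and then take the skeleton, but these operations commute. When $|F|\ge 2$, $\lk_\Delta(F)$ is CM (being a link in the CM complex $\Delta$) of dimension $d-1-|F|\le d-3$, so $\skel_{d-2-|F|}(\lk_\Delta(F))$ is its full complex; then Fl\o ystad again (or rather the simpler fact that a CM complex with $\le q$ vertices deleted, $q$ at most the minimal link size, stays CM of the right dimension) handles the deletion. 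The cleanest route: reduce (i) to the $|F|=1$ case by an easy induction, since $\lk_\Delta(F) = \lk_{\lk_\Delta(v)}(F-\{v\})$ for $v\in F$.

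For (ii), the vanishing of low homology, I would exploit that $\skel_{d-2}(\Delta)$ differs from $\Delta$ only in top dimension, so $\widetilde H_r(\skel_{d-2}(\Delta)_{-W})=\widetilde H_r(\Delta_{-W})$ for all $r<d-2$ (homology in degree $r$ depends only on the $(r+1)$-skeleton). Thus it suffices to show $\widetilde H_r(\Delta_{-W})=0$ for $r<d-2$ whenever $|W|\le q$. Here I expect the main obstacle: $\Delta$ is only assumed CM, not $q$-CM, so $\Delta_{-W}$ need not be CM and I cannot invoke Reisner directly for $\Delta_{-W}$. The fix is a Mayer--Vietoris induction on $|W|$ peeling off one vertex $v_k\in W$ at a time, using $\Delta_{-W'} = \Delta_{-W}\cup \Star_{\Delta_{-W'}}(v_k)$ with $W'=W\setminus\{v_k\}$ (exactly the decomposition used in the proof of Corollary \ref{cor:main}): the star is a cone hence acyclic, its link is $(\lk_\Delta(v_k))_{-W'}$ which is CM of dimension $d-2$ since $\lk_\Delta(v_k)$ is $q$-CM and $|W'|\le q-1$, so $\widetilde H_r$ of the link vanishes for $r<d-2$; feeding this and the inductive hypothesis $\widetilde H_r(\Delta_{-W'})=0$ for $r<d-2$ through the long exact sequence yields $\widetilde H_r(\Delta_{-W})=0$ for $r<d-2$. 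The base case $W=\emptyset$ is just that $\Delta$ is CM. Combining (i), (ii), and the dimension check completes the proof that $\skel_{d-2}(\Delta)$ is $(q+1)$-CM.
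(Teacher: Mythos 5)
Your overall plan (verify Reisner's conditions for $(\skel_{d-2}(\Delta))_{-W}$: dimension, links, and vanishing of low homology) is reasonable, and your part (ii) — the Mayer--Vietoris induction peeling one vertex of $W$ at a time — is exactly the paper's argument for the empty face. But the central step of your part (i) contains a genuine off-by-one error: to prove $(q+1)$-CM-ness you must allow $|W|=q$, while $q$-CM-ness of $\lk_\Delta(v)$ only controls deletions of at most $q-1$ vertices, so your claim that ``$(\lk_\Delta(v))_{-W}$ is CM of dimension $d-2$'' is false in general (take $\Delta$ a triangulated $2$-sphere, so $q=2$, and let $W$ consist of two vertices whose removal disconnects the circle $\lk_\Delta(v)$). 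Avoiding precisely this is the point of the paper's proof: it never deletes more than $q-1$ vertices from a vertex link, because it inducts on $|W|$ and proves purity (its claim (A)) and the homology vanishing for links of \emph{all} faces (its claim (B)) simultaneously via the decomposition $(\lk_\Delta F)_{-W'} = (\lk_\Delta F)_{-W}\cup \Star_{(\lk_\Delta F)_{-W'}}(v_k)$, in which only $W'$ with $|W'|\le q-1$ ever hits $\lk_\Delta(v_k)$. Your argument for nonempty $F$ can be repaired, but only by reversing the order of operations: first pass to the codimension-one skeleton of the link and invoke Fl{\o}ystad's theorem in the stronger form the paper itself quotes in the proof of Theorem \ref{thm:banner-connectivity} (the codimension-one skeleton of a $q$-CM complex is $(q+1)$-CM); since $\lk_\Delta F$ is a link inside the $q$-CM complex $\lk_\Delta(v)$ for $v\in F$, it is $q$-CM, hence $\skel_{d-2-|F|}(\lk_\Delta F)$ is $(q+1)$-CM and survives deletion of $|W|\le q$ vertices. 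With that fix your treatment of the nonempty faces becomes a correct alternative to the paper's claim (B).

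Two further slips. First, for $|F|\ge 2$ you assert that $\skel_{d-2-|F|}(\lk_\Delta F)$ is the full link; since $\Delta$ is CM and hence pure, $\dim\lk_\Delta F=d-1-|F|$, so this is again the codimension-one skeleton, and your ``full complex'' case is simply wrong (the uniform corrected argument above covers it). Second, the dimension/purity step is not disposed of: your first suggestion, that $\Delta_{-W}$ retains a $(d-1)$-face, fails (the boundary of a $d$-simplex has $2$-CM vertex links, yet deleting $q=2$ vertices destroys every facet), and your ``cleaner'' alternative is left as unfinished bookkeeping. What is actually needed — and what the paper proves as claim (A) by the same one-vertex induction, using that $(\lk_\Delta(v_k))_{-W'}$ is pure of dimension $d-2$ — is that every maximal face of $\Delta_{-W}$ has dimension at least $d-2$. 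In your corrected scheme this purity can instead be extracted from the fact that the vertex links of $(\skel_{d-2}(\Delta))_{-W}$ are CM of the correct dimension $d-3$ (with a separate word for $d=2$), but as written this is a gap, not bookkeeping.
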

\begin{proof}
Let $W = \{v_1,\ldots,v_k\} \subseteq V(\Delta)$ be a collection
of vertices with $k \leq q$. As in the proof of Theorem \ref{thm:main},
in order to verify that $\skel_{d-2}(\Delta)$
is $(q+1)$-CM, we need to show that
(A) each face of $\Delta_{-W}$ is contained in a face of
dimension at least $d-2$, and
(B) for each face $F$ of $\Delta_{-W}$ of dimension at most $d-2$,
$\widetilde{H}_i(\lk_{\Delta_{-W}}(F)) = 0$ for all $i < d-|F|-2$.
Both of these claims hold when $k=0$ since $\Delta$ is Cohen-Macaulay.
We thus assume that $k>0$ and proceed by induction on $k$.
Let $W':=W - \{v_k\}$.

To prove claim (A), suppose there is a maximal face $\sigma$ of
$\Delta_{-W}$ with $|\sigma| \leq d-2$. Since $\sigma$ is also a
face of $\Delta_{-W'}$, there is a face $\tau$ of $\Delta_{-W'}$
such that $\sigma \subseteq \tau$ and $|\tau| \geq d-1$.
This means that $\tau = \sigma \cup \{v_k\}$ by our assumption
that $\sigma$ is a maximal face of $\Delta_{-W}$,
and hence $\sigma$ is a face of
$\lk_{\Delta_{-W'}}(v_k) = (\lk_{\Delta}(v_k))_{-W'}$.
Since $|W'| < q$ and $\lk_{\Delta}(v_k)$ is $q$-CM,
$(\lk_{\Delta}(v_k))_{-W'}$ is pure of dimension $d-2$.
Thus there is a face $\tau'$ of $\lk_{\Delta}(v_k)$ with
$|\tau'| \geq d-1$ such that $\tau' \supseteq \sigma$.
This contradicts our assumption that $\sigma$ is maximal.

To prove claim (B), let $F$ be a face of $\Delta_{-W}$ of
dimension at most $d-2$, and let $i < d-|F|-2$.
For ease of notation, we define
\[
\Gamma:=\lk_{\Delta_{-W}}(F) = (\lk_{\Delta}(F))_{-W},
\qquad \text{and similarly} \qquad
\Gamma':=(\lk_{\Delta}(F))_{-W'}.
\]
For any $i < d-|F|-2$, an appropriate piece of the
Mayer-Vietoris sequence for the decomposition of
$\Gamma'$ as  $\Gamma' = \Gamma \cup \Star_{\Gamma'}(v_k)$ is
\begin{displaymath}
\cdots \rightarrow  \widetilde{H}_{i}(\lk_{\Gamma'}(v_k)) \rightarrow
\widetilde{H}_i(\Gamma) \oplus \widetilde{H}_i(\Star_{\Gamma'}(v_k))
\rightarrow \widetilde{H}_i(\Gamma') \rightarrow \cdots .
\end{displaymath}
By our inductive hypothesis on $k$, we see that
$\widetilde{H}_i(\Gamma') = 0$.
Since $\lk_{\Gamma'}(v_k)) = (\lk_{\lk_{\Delta}(F)}(v_k))_{-W'}$
and $\lk_{\Delta}(v_k)$ is $(d-2)$-dimensional and $q$-CM,
we obtain that $\widetilde{H}_i(\lk_{\Gamma'}(v_k)) = 0$.
Thus it follows from exactness that $\widetilde{H}_i(\Gamma) = 0$
as well.
\end{proof}

\begin{corollary}\label{cor:cm-skeleta-links}
Let $\Delta$ be a Cohen--Macaulay complex, and suppose that
$\skel_k(\lk_{\Delta}(F))$ is $q$-CM for all faces $F \in \Delta$ with
$|F|=s$. Then $\skel_k(\Delta)$ is $(q+s)$-CM.
\end{corollary}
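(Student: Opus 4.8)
The plan is to prove Corollary~\ref{cor:cm-skeleta-links} by induction on $s$, using Lemma~\ref{lemma:CM-skeleta} to power the inductive step. The base case $s=0$ is immediate: the hypothesis says $\skel_k(\lk_\Delta(\emptyset)) = \skel_k(\Delta)$ is $q$-CM, which is exactly the conclusion with $s=0$. So assume $s \geq 1$ and that the statement holds for $s-1$.

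For the inductive step, first I would observe that for any vertex $v \in V(\Delta)$, the link $\lk_\Delta(v)$ is Cohen--Macaulay (a standard fact: links of faces of CM complexes are CM), and its faces $F'$ with $|F'| = s-1$ correspond to faces $F = F' \cup \{v\}$ of $\Delta$ with $|F| = s$. By the hypothesis of the corollary, $\skel_k(\lk_\Delta(F)) = \skel_k(\lk_{\lk_\Delta(v)}(F'))$ is $q$-CM for all such $F'$. Applying the inductive hypothesis (for $s-1$) to the complex $\lk_\Delta(v)$, we conclude that $\skel_k(\lk_\Delta(v))$ is $(q+s-1)$-CM for every vertex $v$. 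The one technical wrinkle to check here is the analogue of eq.~(\ref{eq:lk-skel}): taking skeleta commutes appropriately with taking links, so that $\skel_k(\lk_{\lk_\Delta(v)}(F'))$ really is $\skel_k(\lk_\Delta(F))$; this is the same bookkeeping used throughout the paper and is routine.

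Now I would like to feed this into Lemma~\ref{lemma:CM-skeleta}, but that lemma is stated for the codimension-one skeleton $\skel_{d-2}(\Delta)$ of a $(d-1)$-dimensional complex, whereas here I want the $k$-skeleton for arbitrary $k$. The clean fix is to apply Lemma~\ref{lemma:CM-skeleta} not to $\Delta$ itself but to $\skel_{k+1}(\Delta)$: this complex is $(k+1)$-dimensional, it is Cohen--Macaulay because $\Delta$ is CM and skeleta of CM complexes are CM, and for every vertex $v$ its link is $\lk_{\skel_{k+1}(\Delta)}(v) = \skel_k(\lk_\Delta(v))$, which we just showed is $(q+s-1)$-CM. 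Lemma~\ref{lemma:CM-skeleta} then yields that $\skel_{k}(\skel_{k+1}(\Delta)) = \skel_k(\Delta)$ is $(q+s-1+1) = (q+s)$-CM, which is exactly the desired conclusion, completing the induction.

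The main obstacle — really the only thing requiring care — is making sure the skeleton-of-a-skeleton and link-of-a-skeleton identities line up so that the hypotheses of Lemma~\ref{lemma:CM-skeleta} are genuinely met at the right dimension; in particular one must check that $\skel_{k+1}(\Delta)$ is CM (so that Lemma~\ref{lemma:CM-skeleta}'s standing assumption holds) and that its vertex links are exactly $\skel_k(\lk_\Delta(v))$. Both are elementary, so I expect the proof to be short; the content is entirely in correctly iterating the previous lemma.
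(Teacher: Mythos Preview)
Your proposal is correct and is essentially the paper's own argument: both use induction on $s$ together with Lemma~\ref{lemma:CM-skeleta} applied to $\skel_{k+1}$ of the relevant complex, after noting that skeleta of CM complexes are CM and that $\lk_{\skel_{k+1}(\Delta)}(v)=\skel_k(\lk_\Delta(v))$. The only cosmetic difference is that the paper phrases the induction as stepping from faces of size $s$ down to faces of size $s-1$ (eventually reaching $|G|=0$), whereas you set it up as a forward induction on $s$ via vertex links; the content is identical.
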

\begin{proof}
It follows from Reisner's criterion and the assumption that
$\Delta$ is Cohen--Macaulay that $\skel_{k+1}(\Delta)$ is
Cohen--Macaulay as well.  Furthermore, since $\skel_k(\lk_{\Delta}(F))$
is $q$-CM for all faces $F \in \Delta$ with $|F| = s$,
Lemma \ref{lemma:CM-skeleta} implies that $\skel_k(\lk_{\Delta}(G))$
is $(q+1)$-CM for all faces $G \in \Delta$ with $|G| = s-1$.
Inductively, this gives the desired result.
\end{proof}

Now we are ready to prove Theorem \ref{thm:banner-connectivity}

\begin{proof}
Fix $i$ and let $s = b_i(\Delta)$. If $s=d-i-1$, then we only claim that
$\skel_i(\Delta)$ is $(d-i+1)$-CM, and this is immediate from
the fact that $\Delta$ is 2-CM and a result of Fl{\o}ystad \cite{Fl}
asserting that the codimension-1 skeleton of a $q$-CM complex
is $(q+1)$-CM.
Hence assume that $s<d-i-1$. Then by Definition \ref{defn:b_i},
for each face $F \in \Delta$ with $|F| = s$, the link
$\lk_{\Delta}(F)$ is a $(d-s-1)$-dimensional homology sphere
that is $(d-s-i)$-banner.  Thus by Theorem \ref{thm:main},
$\skel_k(\lk_{\Delta}(F))$ is $2(d-s-k)$-CM
provided $0 \leq i-k \leq i$ (which is true by our assumptions).
Therefore by Corollary \ref{cor:cm-skeleta-links},
$\skel_k(\Delta)$ is $(2(d-s-k)+s) = (2(d-k)-b_i(\Delta))$-CM.
\end{proof}

\section{Face numbers of $i$-banner complexes}
Here we discuss how being $i$-banner affects the face
numbers of a simplicial complex. The main result of this section is
the following extension of Frohmader's theorem \cite{Froh}
on the face numbers of flag complexes to $i$-banner complexes.

\begin{theorem} \label{thm:face}
Let $\Delta$ be an $i$-banner simplicial complex (for some $2\leq i \leq d$). 
Then there exists a balanced complex $\Gamma$ with the same number of vertices as $\Delta$ 
such that $f_{k-1}(\Delta)=f_{k-1}(\Gamma)$ for all $k\geq i$.
\end{theorem}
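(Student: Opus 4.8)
The plan is to reduce to Frohmader's theorem on flag complexes by means of a link/restriction argument analogous to the one used for Lemmas~\ref{lemma:link-deletion} and~\ref{lemma:missing-faces}. The key structural fact (a consequence of Lemma~\ref{lemma:missing-faces}) is that an $i$-banner complex $\Delta$ has no missing faces of size larger than $i$; equivalently, every subset $S\subseteq V(\Delta)$ with $|S|\geq i$ all of whose $(i-1)$-subsets are faces is itself a face. In particular, for $k\geq i$ the $(k-1)$-faces of $\Delta$ are exactly the $k$-subsets of $V(\Delta)$ whose $(i-1)$-skeleton lies in $\Delta$. So the high-dimensional face numbers $f_{k-1}(\Delta)$, $k\geq i$, are completely determined by the ``uniform hypergraph'' $H=\{F\in\Delta : |F|=i\}$ on $V(\Delta)$: namely $f_{k-1}(\Delta)$ counts the $k$-cliques of $H$ (thinking of $H$ as an $i$-uniform hypergraph, a $k$-clique being a $k$-set all of whose $i$-subsets are in $H$). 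This is the $i$-banner analogue of the flag case, where the relevant data is just the graph and cliques are ordinary cliques.

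Given this reduction, the idea is to build $\Gamma$ from a suitable flag complex. First I would pass to the flag complex $\Delta'$ whose minimal non-faces are exactly the non-edges of an auxiliary graph $G$ on $V(\Delta)$ that ``encodes'' $H$: one wants the $k$-cliques of $G$ (ordinary graph cliques, $k\geq i$) to be in bijection with the $k$-cliques of $H$. The cleanest route is to replace each vertex-set by a blow-up: introduce one vertex of $G$ for each face in $H$ (or, more economically, keep the same vertex set and put an edge between $u,v$ exactly when $\{u,v\}$ extends within $H$), but the vertex count must be preserved, so I would instead work directly with a flag complex on the original vertex set $V(\Delta)$. Concretely, one checks that the flag complex $\Delta'$ generated by requiring $\{u,v\}\in\Delta'$ iff $\{u,v\}$ lies in some $i$-face of $\Delta$ — more precisely, iff every $(i-1)$-subset of every $i$-set containing $\{u,v\}$ that is ``locally complete'' is in $\Delta$ — has the property that its $(k-1)$-faces for $k\geq i$ coincide with those of $\Delta$; then Frohmader's theorem applied to the flag complex $\Delta'$ yields a balanced $\Gamma$ with $f_j(\Gamma)=f_j(\Delta')=f_j(\Delta)$ for $j\geq i-1$, and a count shows $\Gamma$ can be taken on $|V(\Delta)|$ vertices.

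The main obstacle is precisely the faithful encoding step: producing, on the same vertex set, a flag complex (or graph) whose large cliques match the large cliques of the $i$-uniform hypergraph $H=\{F\in\Delta:|F|=i\}$, without creating spurious high-dimensional faces and without inflating the vertex count. For $i=2$ this is a tautology (take $G$ to be the graph of $\Delta$), which is why Frohmader's theorem is literally the case $i=2$ here. For $i\geq 3$ the subtlety is that an arbitrary $i$-uniform hypergraph need not be the ``$i$-clique hypergraph'' of any graph, so a naive construction fails; the resolution should use the banner hypothesis, which via Lemma~\ref{lemma:missing-faces} forces $H$ to be exactly the set of $i$-cliques of the graph $\skel_1(\Delta)$ together with the closure condition — i.e.\ $\Delta=\skel_{i-2}(\text{full simplex on }V)\ \cup\ \{F: \text{every }(i-1)\text{-subset of }F\text{ is a face}\}$. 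This means $\Delta$ is already ``flag above dimension $i-2$'' in the sense that it is determined by its $(i-1)$-skeleton, and the $(i-1)$-skeleton is itself determined by a down-set of $(i-1)$-sets. I would then apply Frohmader's colored multicomplex / compression machinery not to a graph but directly to the $(i-1)$-skeleton, checking that his proof only uses the ``no large missing faces above the relevant dimension'' property and produces a balanced complex on the same vertex set with matching $f$-numbers in degrees $\geq i-1$; this is where the real work lies, and it is the step I expect to require the most care to state and justify rigorously.
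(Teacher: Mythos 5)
Your reduction has a genuine gap at its central step. You propose to find a flag complex $\Delta'$ on the same vertex set whose faces in dimensions $\geq i-1$ coincide with those of $\Delta$, and then invoke Frohmader; but such a $\Delta'$ need not exist, and the paper's own Proposition \ref{prop:3-banner} gives a counterexample. In the $3$-banner sphere $\tilde{\Delta}$ constructed there, each edge of the triangle $\{x,y\},\{x,z\},\{y,z\}$ lies in a $2$-face of $\tilde{\Delta}$ (e.g.\ $\{x,y,v_\sigma\}$ for $\sigma=\{x,y,a\}$), so any graph whose triangles contain all $2$-faces of $\tilde{\Delta}$ must contain all three of these edges and hence has the triangle $\{x,y,z\}$ --- which is \emph{not} a face of $\tilde{\Delta}$. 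Thus no flag complex on $V(\tilde{\Delta})$ has the same $2$-faces, and for the same reason your assertion that bannerness forces the $(i-1)$-faces to be exactly the $i$-cliques of $\skel_1(\Delta)$ is false (that would say $\Delta$ is flag in those dimensions). What Lemma \ref{lemma:missing-faces} does give --- that faces of dimension $\geq i-1$ are determined by the $(i-1)$-skeleton --- is correct but does not by itself yield a graph encoding. Your fallback, ``run Frohmader's compression machinery directly on the $(i-1)$-skeleton and check it only uses no-large-missing-faces,'' is precisely the substantive work, and you leave it undone; the paper carries it out as an induction on $i$ (Lemma \ref{lemma:k}), following Frohmader's vertex-decomposition scheme but replacing flagness by the facts that vertex links of $i$-banner complexes are $(i-1)$-banner and that faces of $\Delta$ of dimension $\geq i-1$ supported on $V(\lk_\Delta v)$ already lie in the link (Lemma \ref{lemma:link-deletion}), and then splicing consecutive face numbers via the Frankl--F\"uredi--Kalai theorem.

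A second, independent gap is the vertex count. You write ``a count shows $\Gamma$ can be taken on $|V(\Delta)|$ vertices,'' but this is not automatic: one needs to know that the rev-lex balanced complex $\C^d_{i-1}(f_{i-1}(\Delta))$ uses at most $n$ vertices, i.e.\ a Tur\'an-type upper bound $f_{i-1}(\Delta)\leq {n \choose i}_d$ for $i$-banner complexes. The paper proves this separately (Lemma \ref{f-nums-bound-i-banner}) by a pigeonhole argument on non-edges to a facet together with Zykov's extension of Tur\'an's theorem, and this bound genuinely uses the banner hypothesis (via the critical-clique condition on $F\cup\{w\}$), not just the absence of large missing faces. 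Without both of these ingredients --- the inductive analogue of Frohmader's lemma for pairs $(f_{k-1},f_k)$, $k\geq i$, and the Tur\'an-type bound on $f_{i-1}$ --- the proposal does not constitute a proof.
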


The face numbers of balanced complexes (both numerically and
combinatorially) were characterized in \cite{FFK}. For our purposes we will
only need a combinatorial characterization. It relies on the notion of the
{\em reverse-lexicographic} (``rev-lex'', for short) order: if $A$ and $B$
are two finite equal-size subsets of $\N$---the set of positive integers,
then we say that $A$ precedes $B$ in the rev-lex order
and write $A\prec B$ if $\max\left((A \setminus B) \cup (B \setminus A)\right)$
is an element of $B$. For instance, $\{3,5,7\}\prec \{2,6,7\}$.

We say that a simplicial complex $\Delta$ is {\em $d$-colorable}
if $V(\Delta)$ can be partitioned into $d$ sets $V_1,\ldots,V_d$
(``colors'') in such a way that $|F\cap V_j|\leq 1$ for all
$1\leq j \leq d$. (Thus a $(d-1)$-dimensional complex is $d$-colorable
if and only if it is a balanced complex.)
Also call a subset $A$ of $\N$ {\em $d$-permissible}
if no two distinct elements of $A$ have the same remainder modulo $d$.
Given $k,m\geq 0$, let $\I^d_{k}(m)$ denote the collection of first $m$-many
$d$-permissible $(k+1)$-subsets of $\N$ in rev-lex order, and let
$\C^d_k(m)$ denote the $k$-dimensional pure simplicial complex whose set of
facets is $\I^d_{k}(m)$. Note that $\C^d_k(m)$ is a $d$-colored complex, as
is any complex all of whose faces are $d$-permissible sets
(with one color for each remainder modulo $d$).

\begin{theorem} \label{thm:FFK}
{\rm (Frankl-F\"uredi-Kalai, \cite{FFK})}
Let $a$, $b$, and $k$ be nonnegative integers.
Then there exists a $d$-colorable simplicial complex $\Delta$
with $f_{k-1}(\Delta)=b$ and $f_{k}(\Delta)=a$ if and only if
$\C^d_{k-1}(b) \cup \I^d_k(a)$ is a simplicial complex.
\end{theorem}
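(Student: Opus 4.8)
The plan is to prove the two directions separately; the ``if'' direction is routine, and essentially all the substance is in the ``only if'' direction, which is the nontrivial half of \cite{FFK} (the colored Kruskal--Katona inequality). I sketch a reproof of each below.

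\emph{The ``if'' direction.} Suppose $\Sigma:=\C^d_{k-1}(b)\cup\I^d_k(a)$ is a simplicial complex; I would simply take $\Delta=\Sigma$. Every face of $\Sigma$ is a $d$-permissible subset of $\N$ --- the facets of $\C^d_{k-1}(b)$ are $d$-permissible $k$-sets and the members of $\I^d_k(a)$ are $d$-permissible $(k+1)$-sets --- so coloring each vertex by its residue modulo $d$ exhibits $\Sigma$ as $d$-colorable. Since $\C^d_{k-1}(b)$ is pure of dimension $k-1$ with exactly $b$ facets, and since the hypothesis that $\Sigma$ is a complex forces every $k$-subset of a member of $\I^d_k(a)$ to lie already in $\C^d_{k-1}(b)$, the $(k-1)$-faces of $\Sigma$ are exactly those $b$ facets, so $f_{k-1}(\Sigma)=b$; and $f_k(\Sigma)=|\I^d_k(a)|=a$ because $\C^d_{k-1}(b)$ contributes no $k$-faces. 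The few degenerate cases ($k=0$, or $a$ or $b$ zero) are checked directly.

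\emph{The ``only if'' direction.} Let $\Delta$ be $d$-colorable with $f_{k-1}(\Delta)=b$ and $f_k(\Delta)=a$. Replacing $\Delta$ by an order-isomorphic copy, I may assume that its $j$-th color class is an initial segment of $\{j,j+d,j+2d,\dots\}$, so that every face of $\Delta$ is $d$-permissible; this changes neither the face numbers nor $d$-colorability. Let $\partial$ denote the lower shadow (a family of $(k+1)$-sets maps to the family of all its $k$-subsets) and let $\mathcal{F}$ be the set of $k$-faces of $\Delta$, a family of $a$ many $d$-permissible $(k+1)$-sets. The argument then hinges on two facts about the initial segments $\I^d_\bullet(\cdot)$: (F1) $\partial\I^d_k(a)=\I^d_{k-1}(m)$, where $m:=|\partial\I^d_k(a)|$ --- the shadow of a rev-lex initial segment is again a rev-lex initial segment; and (F2) $|\partial\mathcal{G}|\ge|\partial\I^d_k(|\mathcal{G}|)|$ for every family $\mathcal{G}$ of $d$-permissible $(k+1)$-sets --- initial segments minimize the shadow. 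Granting these: since $\Delta$ is a complex, $\partial\mathcal{F}$ is contained in the set of $(k-1)$-faces of $\Delta$, so $b=f_{k-1}(\Delta)\ge|\partial\mathcal{F}|\ge m$ by (F2); and since initial segments of a fixed cardinality nest, (F1) gives $\partial\I^d_k(a)=\I^d_{k-1}(m)\subseteq\I^d_{k-1}(b)$, the facet set of $\C^d_{k-1}(b)$. Hence every $k$-subset of a member of $\I^d_k(a)$ is a face of $\C^d_{k-1}(b)$, so $\C^d_{k-1}(b)\cup\I^d_k(a)$ is closed under passing to subsets, i.e.\ is a simplicial complex.

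It remains to establish (F1) and (F2), and this is where the work lies. Fact (F1) is a direct property of the reverse-lexicographic (colex) order: listing $d$-permissible $(k+1)$-sets by largest element and recursing on the remaining elements exhibits $\I^d_k(a)$, for arbitrary $a$, as all such sets with largest element below some threshold $N$ together with a colex-initial block of extensions-by-$N$ of $d$-permissible $k$-sets; computing the shadow of this description and matching it term by term against the definition of $\I^d_{k-1}(\cdot)$ yields (F1), by induction on $k$. Fact (F2), the colored analogue of the Kruskal--Katona shadow inequality, is the main obstacle: I would prove it by a compression argument in the spirit of the Kruskal--Katona proof, the new difficulty being that the compression operators must respect the coloring. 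Besides the within-color substitutions --- replace a vertex in a face by a smaller vertex of the same color whenever the result is not already present --- one typically also needs color-exchange moves that let sets migrate toward the initial segment by transferring a ``small index'' from one color class to another. One must then verify that every such operator is injective on the faces of each fixed dimension (so $|\mathcal{G}|$ is preserved), is hereditary (so a subset-closed family stays subset-closed), and does not increase $|\partial(\cdot)|$; that a suitable potential (e.g.\ the sum of the vertex labels in use) strictly decreases under any non-trivial application, forcing termination; and that a family fixed by all the operators is an initial segment. Pinning down exactly the right family of operators and checking that the shadow does not grow under each of them is the technical heart of the proof --- this is precisely what is carried out in \cite{FFK}.
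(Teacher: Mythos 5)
Your outline is essentially correct, but note that the paper does not prove this statement at all: it is imported verbatim from Frankl--F\"uredi--Kalai \cite{FFK}, so there is no internal proof to compare against. What you have written is a correct reduction of the stated equivalence to the two standard facts about the colored rev-lex order: the ``if'' direction is handled properly (taking $\Delta=\C^d_{k-1}(b)\cup\I^d_k(a)$ itself, with the face counts $f_{k-1}=b$, $f_k=a$ and $d$-colorability via residues mod $d$), and in the ``only if'' direction the relabeling of each color class into a single residue class, the chain $b=f_{k-1}(\Delta)\ge|\partial\mathcal{F}|\ge|\partial\I^d_k(a)|$, and the nesting $\partial\I^d_k(a)=\I^d_{k-1}(m)\subseteq\I^d_{k-1}(b)$ do yield that the union is closed under taking subsets. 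The caveat is that (F1) and (F2) --- the statement that shadows of rev-lex initial segments of $d$-permissible sets are again initial segments, and the colored Kruskal--Katona shadow inequality --- are only sketched, with the compression argument explicitly deferred to \cite{FFK}; that is exactly the technical content of the cited theorem, so your write-up is a derivation of the paper's formulation from the FFK shadow theorem rather than a self-contained proof. Since the paper itself simply cites \cite{FFK}, this level of detail is appropriate, and your reconstruction has the added value of making explicit how the ``union is a complex'' formulation follows from the shadow inequality.
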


We start the proof of Theorem \ref{thm:face} by verifying two lemmas.
The proof of the first of them is essentially the same as Frohmader's 
proof \cite{Froh} but relies on the observation that restrictions of $i$-banner
complexes are $i$-banner and on Lemma \ref{lemma:link-deletion}
instead of analogous statements for flag complexes.

\begin{lemma}  \label{lemma:k}
Let $\Delta$ be a $(d-1)$-dimensional $i$-banner complex and
$i\leq k\leq d-1$. Then there exists a $d$-colorable simplicial
complex $\Gamma$ such that $f_{k-1}(\Gamma)=f_{k-1}(\Delta)$
and $f_{k}(\Gamma)=f_{k}(\Delta)$.
\end{lemma}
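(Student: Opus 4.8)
The plan is to reduce to the Frankl--F\"uredi--Kalai criterion and then reproduce Frohmader's proof of the flag case \cite{Froh}, making exactly two substitutions. By Theorem \ref{thm:FFK}, producing a $d$-colorable $\Gamma$ with $f_{k-1}(\Gamma)=f_{k-1}(\Delta)$ and $f_k(\Gamma)=f_k(\Delta)$ amounts to showing that $\C^d_{k-1}(f_{k-1}(\Delta))\cup\I^d_k(f_k(\Delta))$ is a simplicial complex; since $\C^d_{k-1}(b)$ is the pure $(k-1)$-dimensional complex whose facets are the first $b$ $d$-permissible $k$-sets in rev-lex order, this says precisely that every $k$-subset of every member of $\I^d_k(f_k(\Delta))$ lies in $\I^d_{k-1}(f_{k-1}(\Delta))$ --- equivalently, after the standard compression reduction, that these $k$-subsets number at most $f_{k-1}(\Delta)$.

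I would prove this by an outer induction on the banner parameter $i$, with the flag case $i\le 2$ (Lemma \ref{lemma:flag}) as the base: there Frohmader's theorem \cite{Froh} already supplies a balanced complex realizing the entire $f$-vector of $\Delta$. For $i\ge 3$ (hence $d\ge i\ge 3$) I would run Frohmader's argument, which is itself an induction on the number of vertices: choose a vertex $v$ and split the face count along the star, $f_k(\Delta)=f_k(\Delta_{-v})+f_{k-1}(\lk_\Delta v)$ and $f_{k-1}(\Delta)=f_{k-1}(\Delta_{-v})+f_{k-2}(\lk_\Delta v)$. The deletion $\Delta_{-v}$ is again $i$-banner (restrictions of $i$-banner complexes are $i$-banner, immediately from the definition), so it is handled by the inner induction at the same $i$. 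The link $\lk_\Delta(v)$ is $(i-1)$-banner by Lemma \ref{lemma:link-deletion} and has dimension at most $d-2$, so it is handled by the outer induction hypothesis with $k$ replaced by $k-1$; since $i-1\le k-1$, the link still meets the hypothesis of the lemma, and this is exactly why the restriction $k\ge i$ is imposed. Frohmader's recombination of the two pieces is a manipulation of rev-lex initial segments of $d$-permissible sets and carries over unchanged.

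The single point where Frohmader's proof uses more than this recursive structure is his appeal to the flag identity $\Delta[V(\lk_\Delta v)]=\lk_\Delta(v)$ (in a flag complex, every clique in a link is a face of that link). Here the substitute is the ``moreover'' clause of Lemma \ref{lemma:link-deletion}: any face of $\Delta$ supported on $V(\lk_\Delta v)$ of dimension $\ge i-1$ is already a face of $\lk_\Delta(v)$, so $\Delta[V(\lk_\Delta v)]$ and $\lk_\Delta(v)$ agree in every dimension $\ge i-1$; and $k\ge i$ guarantees that only those dimensions enter the accounting of $f_{k-1}$, $f_k$ and of the relevant face numbers of the link.

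The hard part is entirely in the bookkeeping inside Frohmader's rev-lex/compression machinery: one must trace through it and confirm that every appeal there to the flagness of links and restrictions is met by the two weaker facts above --- restrictions stay $i$-banner, and links become $(i-1)$-banner of one lower dimension, agreeing with the induced subcomplex above dimension $i-2$ --- and that none of Frohmader's flag-specific inequalities secretly require control of faces of dimension below $i-1$. A side issue to dispose of is the degenerate situation in which $\Delta_{-v}$ or $\lk_\Delta(v)$ is too low-dimensional to carry $k$-faces, so that it contributes nothing (or only to $f_{k-1}$); this is folded into the base of the inner induction exactly as Frohmader folds in cones and low-dimensional links.
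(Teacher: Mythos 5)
Your high-level plan is indeed the paper's: the proof there is an induction on $i$ with Frohmader's Lemma 4.1 as the base case $i\le 2$, and the only banner-specific inputs are exactly the ones you name --- restrictions of $i$-banner complexes are $i$-banner, links are $(i-1)$-banner, and the ``moreover'' clause of Lemma \ref{lemma:link-deletion} replaces the flag identity $\Delta[V(\lk_\Delta v)]=\lk_\Delta(v)$ in dimensions $\ge i-1$, which is where $k\ge i$ enters. So the substitution points are correctly identified.

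The gap is in the one step you actually wrote down, the decomposition and ``recombination.'' The paper has no inner induction on the number of vertices, and a single link/deletion split $f_k(\Delta)=f_k(\Delta_{-v})+f_{k-1}(\lk_\Delta v)$ with the two pieces realized separately does not recombine for free: if you realize the deletion pair by a $d$-colorable complex and the link pair by a $(d-1)$-colorable complex and cone over the latter with a disjoint new vertex, the cone's base contributes its own $(k-1)$- and $k$-faces and you overcount; to get the exact sum the cone base must already sit inside the other complex, and that nesting is a nontrivial inequality, not bookkeeping. The actual argument secures it by a specific choice: take $v_0$ maximizing $a_0=f_{k-1}(\lk_\Delta v_0)$, peel off $v_0$ together with \emph{all} its non-neighbors $v_1,\dots,v_s$, apply the induction at $i-1$ to each link $\lk_{\Delta_{-W_j}}(v_j)$ to get complexes $\Lambda(b_j,a_j)=\C^{d-1}_{k-2}(b_j)\cup\I^{d-1}_{k-1}(a_j)$ and to $\lk_\Delta(v_0)$ to get a core $(d-1)$-colorable complex $\Lambda$ with $f_k(\Lambda)=f_k(\lk_\Delta v_0)$, $f_{k-1}(\Lambda)=a_0$; maximality of $v_0$ gives $a_j\le a_0$, hence $\I^{d-1}_{k-1}(a_j)\subseteq\I^{d-1}_{k-1}(a_0)\subseteq\Lambda$ (after padding $\Lambda$ with faces of dimension $\le k-2$ to swallow the $\C^{d-1}_{k-2}(b_j)$), so each $\Lambda(b_j,a_j)$ is a subcomplex of $\Lambda$ and $\Gamma=\bigl(\bigcup_j v'_j\ast\Lambda(b_j,a_j)\bigr)\cup\Lambda$, with the $v'_j$ new vertices of color $d$, has exactly $f_{k-1}(\Delta)=a_0+\sum_j b_j$ and $f_k(\Delta)=f_k(\lk_\Delta v_0)+\sum_j a_j$; the ``moreover'' clause is what identifies the $(k-1)$- and $k$-faces of the restriction to $V(\lk_\Delta v_0)$ with those of the link so that this count is right. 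Your proposal delegates precisely this construction to ``carries over unchanged,'' so as written it is a plan for a proof rather than a proof; to complete it you must reproduce the maximal-vertex peeling and nesting argument above (which is what both Frohmader and the paper do), not a one-vertex deletion recursion.
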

\begin{proof} The proof is by induction on $i$. Lemma 4.1 in \cite{Froh}
proves an analogous statement for flag complexes, and hence the result
holds when $i\leq 2$. So fix $i\geq 3$ and $k\geq i\geq 3$
and assume that the statement holds for $i-1$ and all $k'\geq i-1$.

Let $v_0$ be the vertex of $\Delta$ with the property that
$f_{k-1}(\lk_\Delta v_0)\geq f_{k-1}(\lk_\Delta v)$ for all $v\in V$.
In other words, $v_0$ is contained in the most $k$-faces of $\Delta$.
Let $v_1,\ldots, v_s$ be all vertices of $\Delta$ that do not belong
to the link of $v_0$. Set $W_0=\emptyset$, and
for $1\leq j\leq s+1$ consider
$W_j:=\{v_0,v_1,\ldots, v_{j-1}\}$. Thus $\Delta_{-W_{s+1}}$ is the
restriction of $\Delta$ to the vertex set of $\lk_\Delta v_0$, and
\[\Delta=\Delta_{-W_0}
\supseteq \Delta_{-W_1}\supseteq \cdots \supseteq \Delta_{-W_{s+1}}.
\]
Note also that $\Delta_{-W_{j}}\setminus \Delta_{-W_{j+1}}$ is
precisely the set of faces of $\Delta_{-W_{j}}$ that contain $v_j$.

Let $f_{k-1}(\lk_{\Delta_{-W_j}}v_j)=a_j$ and
$f_{k-2}(\lk_{\Delta_{-W_j}}v_j)=b_j$. Two observations are in order.
First, since restrictions of $i$-banner
complexes are also $i$-banner, Lemma \ref{lemma:link-deletion}
implies that for all $0\leq j \leq s$, $\lk_{\Delta_{-W_j}}v_j$
is a $(d-2)$-dimensional $(i-1)$-banner complex. Thus by our induction
hypothesis along with Theorem~\ref{thm:FFK},
\begin{eqnarray}
\Lambda(b_j,a_j)&:=&\C^{d-1}_{k-2}(b_j)\cup \I^{d-1}_{k-1}(a_j) \;\;
\text{is a simplicial complex for all $0\leq j \leq s$, and}
\label{simpl-compl2} \\
\Lambda_0&:=&\C^{d-1}_{k-1}(a_0)
\cup \I^{d-1}_k(f_{k}(\lk_\Delta v_0))
\;\; \text{is a simplicial complex.}
 \label{simpl-compl1}
\end{eqnarray}
Second, since the $p$-faces of $\Delta_{-W_j}$ containing $v_j$ are in
bijection with $(p-1)$-faces of $\lk_{\Delta_{-W_j}}v_j$
and since by Lemma \ref{lemma:link-deletion}, the set
of $(k-1)$- and $k$-faces of $\lk_{\Delta}(v_{0})$ coincides
with the set of $(k-1)$- and $k$-faces of $\Delta_{-W_{s+1}}$,
we conclude that
\begin{equation}  \label{eq:face-numbers}
f_{k}(\Delta)= f_k(\lk_\Delta(v_0)) + \sum_{j=0}^s a_j
\quad \mbox{and} \quad
f_{k-1}(\Delta)=a_0 + \sum_{j=0}^s b_j.
\end{equation}

We now construct a $d$-colorable complex $\Gamma$ with
$f_{k-1}(\Gamma)=f_{k-1}(\Delta)$ and $f_{k}(\Gamma)=f_{k}(\Delta)$.
To do so, start with a $(d-1)$-colorable complex $\Lambda_0$
of eq.~(\ref{simpl-compl1}). By adding to $\Lambda_0$ faces of dimension
$\leq k-2$, if needed, we obtain a $(d-1)$-colorable simplicial complex
$\Lambda$ with
\begin{equation} \label{eq:Lambda-faces}
 f_k(\Lambda)= f_k(\lk_\Delta(v_0)), \;\; f_{k-1}(\Lambda)=a_0, \;\;
\mbox{and } \;\;
\Lambda\supseteq \C^{d-1}_{k-2}(\max\{b_0,b_1,\ldots, b_s\}).
\end{equation}
In addition, by our choice of $v_0$,
\[a_0=f_{k-1}(\lk_\Delta v_0)
\geq f_{k-1}(\lk_\Delta v_j)
\geq f_{k-1}(\lk_{\Delta_{-W_j}} v_j)=a_j \quad \mbox{for all $0\leq j \leq
s$}.
\] Hence,
\[
\Lambda\supseteq \Lambda_0\supset  \I^{d-1}_{k-1}(a_0)\supseteq
\I^{d-1}_{k-1}(a_j),
\]
which combined with eq.~(\ref{eq:Lambda-faces}) yields that
each complex $\Lambda(b_j,a_j)$ of eq.~(\ref{simpl-compl2}) is
a subcomplex of $\Lambda$.
Let $v'_0,v'_1,\ldots,v'_s$ be $s+1$ new vertices of color $d$, and define
\[\Gamma:=\left( \bigcup v'_j\ast \Lambda(b_j,a_j) \right)\cup\Lambda.\]
The above discussion shows that $\Gamma$ is a well defined simplicial
complex, it is $d$-colorable and has the same number of $k$-faces and
$(k-1)$-faces as $\Delta$ (the last statement is a consequence of our
definition of $\Gamma$ and equations (\ref{eq:face-numbers}) and
(\ref{eq:Lambda-faces})). The lemma follows.
\end{proof}

For the second lemma, let $\Gamma(n,d)$ denote the complete multipartite 
graph on $n$ vertices and $d$ parts, each of which has size either 
$\lceil \frac{n}{d} \rceil$ or $\lfloor \frac{n}{d} \rfloor$, and let 
${n \choose j}_d$ denote the number of $j$-cliques in $\Gamma(n,d)$.

\begin{lemma} \label{f-nums-bound-i-banner}
Let $\Delta$ be an $i$-banner simplicial complex of dimension $d-1$ on $n$ vertices.  
Then $f_{i-1}(\Delta) \leq {n \choose i}_d$.
\end{lemma}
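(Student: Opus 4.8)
The plan is to bound $f_{i-1}(\Delta)$ — the number of $(i-1)$-faces of $\Delta$ — by comparing it to the number of $i$-cliques in the Tur\'an-type graph $\Gamma(n,d)$. The key structural fact I would exploit is Lemma~\ref{lemma:missing-faces}: since $\Delta$ is $i$-banner with $i \geq 2$, every missing face of $\Delta$ has size at most $i$, so the $1$-skeleton (graph) $G$ of $\Delta$ carries a lot of information. In particular, every $(i-1)$-face of $\Delta$ is an $i$-clique of $G$, so $f_{i-1}(\Delta) \leq (\text{number of } i\text{-cliques of } G)$. Thus it suffices to bound the number of $i$-cliques in $G$, and for that I would invoke the fact that $\Delta$ is $(d-1)$-dimensional. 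The main point is that $G$ contains no clique of size $d+1$: such a clique would be a critical clique of size $\geq i+1$ (note $d+1 \geq i+1$ since $i \leq d$), hence a face of the $i$-banner complex $\Delta$, contradicting $\dim \Delta = d-1$. Wait — one must be slightly careful: a clique of size $d+1$ need not itself be critical. But if $G$ contains a $(d+1)$-clique $C$, take any $(d+1)$-subset and argue by downward induction exactly as in the proof of Lemma~\ref{lemma:flag}: removing a vertex from a $(d+1)$-clique still leaves a clique of size $d \geq i+1$, which by induction is a face, so the $(d+1)$-clique is critical, hence a face — contradiction. So $G$ is a graph on $n$ vertices with no $K_{d+1}$.

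Now I would apply the Kruskal--Katona-type / Tur\'an-density result for cliques: among all $K_{d+1}$-free graphs on $n$ vertices, the complete $d$-partite graph $\Gamma(n,d)$ with balanced parts maximizes the number of $j$-cliques \emph{simultaneously} for every $j$. This is a classical theorem of Zykov (and reproved by many authors); it says precisely that for a $K_{d+1}$-free graph $G$ on $n$ vertices, the number of $j$-cliques of $G$ is at most ${n \choose j}_d$ for all $j$. Applying this with $j = i$ gives
\[
f_{i-1}(\Delta) \;\leq\; \#\{i\text{-cliques of } G\} \;\leq\; {n \choose i}_d,
\]
which is exactly the claim.

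The main obstacle — really the only nontrivial input — is the clique-counting extremal result itself, i.e.\ that $\Gamma(n,d)$ maximizes the number of $j$-cliques among $K_{d+1}$-free graphs. I would cite Zykov's theorem for this rather than reprove it. Everything else is routine: the reduction from $(i-1)$-faces to $i$-cliques of the graph is immediate, and the $K_{d+1}$-freeness of the graph is the same downward-induction argument already used in Lemma~\ref{lemma:flag}, using only that $\Delta$ is $i$-banner and $(d-1)$-dimensional with $i \leq d$. One bookkeeping subtlety worth flagging in the write-up: the case $i = d$ is degenerate but fine, since then $K_{d+1}$-freeness still holds and ${n \choose d}_d$ counts $d$-cliques of $\Gamma(n,d)$; and the hypothesis $i \geq 2$ guarantees Lemma~\ref{lemma:missing-faces} applies, though in fact for the clique argument we only need $\dim \Delta = d-1$ and the $i$-banner property directly.
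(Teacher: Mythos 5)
Your reduction to counting cliques of the $1$-skeleton breaks down at the claim that the graph $G$ of $\Delta$ contains no $K_{d+1}$. The downward induction from Lemma~\ref{lemma:flag} does not transfer: its base case uses that $2$-cliques are automatically faces, whereas for an $i$-banner complex with $i\geq 3$ a clique of size at most $i$ need not be a face (missing faces of size up to $i$ are allowed), so the induction cannot get started --- the banner condition forces only \emph{critical} cliques of size $\geq i+1$ to be faces, and a $(d+1)$-clique none of whose $d$-element subsets is a face is never forced. Concretely, let $\Delta_1$ be the $(i-2)$-skeleton of a $d$-simplex: its graph is $K_{d+1}$, all its faces have size $\leq i-1$, and it has no critical cliques of size $\geq i+1$; taking the union with a $(d-1)$-simplex $\Delta_2$ on disjoint vertices gives a $(d-1)$-dimensional $i$-banner complex whose graph contains $K_{d+1}$. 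Worse, the inequality your chain actually needs, $\#\{i\text{-cliques of }G\}\leq \binom{n}{i}_d$, is itself false: replace $\Delta_1$ by the $(i-2)$-skeleton of a simplex on $m$ vertices with $m$ large (for instance $i=d=3$, so $\Delta_1$ is $K_m$ viewed as a $1$-complex, together with a disjoint triangle); this is still $i$-banner of dimension $d-1$, but its graph has roughly $m^i/i!$ many $i$-cliques, which exceeds $\binom{m+d}{i}_d\approx \binom{d}{i}(m/d)^i$ for large $m$, even though $f_{i-1}$ itself stays small. So the strategy ``$(i-1)$-faces $\leq$ $i$-cliques of $G$, then apply Zykov to $G$'' cannot be repaired by a better argument for $K_{d+1}$-freeness; the face structure above the graph has to enter the proof. (Your aside that $d\geq i+1$ also fails when $i\in\{d-1,d\}$, but that is minor by comparison.)

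For contrast, the paper's proof is an induction on $n$ and $d$ that uses bannerness exactly where your argument is sound for a \emph{facet}: if $F$ is a face of size $d$ and $w\notin F$ were adjacent to all of $F$, then $F\cup\{w\}$ would be a critical clique of size $d+1\geq i+1$, hence a face --- contradiction. So every outside vertex has a non-neighbor in $F$, and by pigeonhole some $v\in F$ has at least $\lceil n/d\rceil-1$ non-neighbors. One then bounds $f_{i-1}(\Delta)=f_{i-2}(\lk_\Delta(v))+f_{i-1}(\Delta-v)$ by induction ($\lk_\Delta(v)$ is $(i-1)$-banner, $(d-2)$-dimensional, on at most $n-\lceil n/d\rceil$ vertices) and concludes via the identity $\binom{n-\lceil n/d\rceil}{i}_{d-1}+\binom{n-1}{i}_d=\binom{n}{i}_d$; Zykov's theorem appears only for the auxiliary comparison $\binom{n-1}{i}_{d-1}\leq\binom{n-1}{i}_d$, not as the main engine as in your proposal.
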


\begin{proof}
We prove the claim by induction on $d$ and on $n$.  The result holds when $d=2$ 
by Tur\'an's Theorem and is clear when $n=d$.  So inductively we may 
suppose that the claim holds for all $(i-1)$-banner complexes of dimension $d-2$ 
and for all $i$-banner complexes of dimension at most $d-1$ on fewer than $n$ vertices.  
We will make use of the fact that, 
$f_{i-1}(\Delta) = f_{i-2}(\lk_{\Delta}(u)) + f_{d-1}(\Delta-u)$ for any vertex 
$u \in \Delta$ and use induciton to bound each of these pieces.

Let $F$ be a $(d-1)$-dimensional face of $\Delta$, and let $W \subseteq V(\Delta)$ 
denote the collection of vertices that do not lie on $F$.  
For each vertex $w \in W$, there is at least one vertex $v \in F$ such that
$\{v,w\} \notin \Delta$: otherwise, $F \cup \{w\}$ would be a critical clique 
of size $d+1$ in $\Delta$.   By the pigeonhole principle, there is some vertex 
$v \in F$ that contributes to at least 
$\lceil \frac{n-d}{d}\rceil = \lceil \frac{n}{d} \rceil-1$ missing edges in $\Delta$.  
Since $v$ lies on the $(d-1)$-face $F$, $\lk_{\Delta}(v)$ is an $(d-2)$-dimensional and 
$(i-1)$-banner simplicial complex on at most 
$(n-1)-(\lceil \frac{n}{d}\rceil-1) = n-\lceil\frac{n}{d}\rceil$ vertices.  

On the other hand, $\Delta-v$ is either $(d-1)$-dimensional or $(d-2)$-dimensional.  
In the former case, the inductive hypothesis implies that 
$f_{i-1}(\Delta-v) \leq {n-1 \choose i}_d$; however, in the latter case 
the inductive hypothesis only gives $f_{i-1}(\Delta-v) \leq {n-1 \choose i}_{d-1}$.  
An extension of Tur\'an's theorem due to Zykov \cite{Zyk} shows that $\Gamma(n-1,d)$ 
has the maximum number of $j$-cliques among all $d$-colorable graphs for any 
$1 \leq j \leq d$.  Since $\Gamma(n-1,d-1)$ is $(d-1)$-colorable 
(and hence $d$-colorable), it follows that 
${n-1 \choose i}_{d-1} \leq {n-1 \choose i}_d$ in this case as well.

Thus 
\begin{eqnarray*}
f_{i-1}(\Delta) &=& f_{i-2}(\lk_{\Delta}(v)) + f_{i-1}(\Delta-v) \\
&\leq& {n-\lceil \frac{n}{d} \rceil \choose i}_{d-1} + {n-1 \choose i}_d.
\end{eqnarray*}
In order to complete the proof, we claim that 
${n-\lceil \frac{n}{d} \rceil \choose i}_{d-1} + {n-1 \choose i}_d 
= {n \choose i}_d$.  
Indeed, let $W'$ be a partite set of $\Gamma(n,d)$ with 
$\lceil \frac{n}{d} \rceil$ vertices, and let $x$ be a vertex in $W'$.  
Then there are ${n-\lceil \frac{n}{d} \rceil \choose i}_{d-1}$ $i$-cliques 
in $\Gamma(n,d)$ that contain $x$ and ${n-1 \choose i}_d$ $i$-cliques that 
do not contain $x$. 
\end{proof}

We are now ready to prove Theorem \ref{thm:face}.

\begin{proof} [of Theorem \ref{thm:face}]  \quad If $\Delta$
is an $i$-banner $(d-1)$-dimensional complex on $n$ vertices, 
then by Lemma~\ref{f-nums-bound-i-banner} the complex 
$\C^d_{i-1}(f_{i-1}(\Delta))$ has at most $n$ vertices.  
By Lemma~\ref{lemma:k} and Theorem~\ref{thm:FFK}, the complex
$\C^d_{i-1}(f_{i-1}(\Delta))\cup (\cup_{j=i}^{d-1}\I^d_{j}(f_j(\Delta)))$
is a balanced complex whose face numbers of dimension $i-1$ and higher 
coincide with those of $\Delta$. 
\end{proof}

\begin{remark}  \label{remark:h}
The $h$-numbers, $h_j=h_j(\Delta)$, of a $(d-1)$-dimensional
simplicial complex $\Delta$ are defined by
\[
\sum_{j=0}^d f_{j-1} (x-1)^{d-j}=\sum_{j=0}^d h_j(\Delta)x^{d-j}.
\]
Athanasiadis \cite{Ath} shows that the $h$-numbers of an arbitrary
2-CM flag complex of dimension $d-1$ satisfy $h_j\geq \binom{d}{j}$
for all $0\leq j\leq d$. The proof of this result is an induction
argument that relies on standard results about the $h$-numbers of CM
complexes along with the fact that the graph of a 2-CM flag
complex is not a clique and that vertex links of a flag 2-CM complex
are also flag 2-CM complexes of a smaller dimension.
As the graph of a 2-CM $d$-banner $(d-1)$-dimensional complex
is not a clique (indeed, the only $(d-1)$-dimensional $d$-banner complex
whose graph is a clique is a simplex), and since vertex links of
$d$-banner complexes are $(d-1)$-banner, exactly the same argument
as in \cite{Ath} yields the following more general result:
{\em If $\Delta$ is a 2-CM $(d-1)$-dimensional simplicial
complex that is $d$-banner, then $h_j(\Delta)\geq \binom{d}{j}$
for all $0\leq j\leq d$.}
\end{remark}

\section{Open problems}
We conclude the paper with a few open problems.

As an extension of the Kalai-Eckhoff conjecture,
Kalai also conjectured that the $f$-vector of a Cohen--Macaulay
flag complex is the $f$-vector of a Cohen--Macaulay balanced complex.
It would be interesting to try to extend Theorem \ref{thm:face} to
Cohen--Macaulay complexes.

\begin{question}
Let $\Delta$ be a Cohen--Macaulay $i$-banner simplicial complex.
Is there a Cohen--Macaulay balanced complex $\Gamma$ for which
$f_{k-1}(\Delta) = f_{k-1}(\Gamma)$ for all $k \geq i$?
\end{question}

Let $\Delta$ be an $i$-banner complex.
Theorem \ref{thm:face} together with \cite{FFK} provides an upper
bound on $f_{j}(\Delta)$ in terms of $f_{j-1}(\Delta)$ for all $j\geq i$.
However, at present we do not have any non-trivial bounds on the
lower-dimensional face numbers of $i$-banner complexes.

A simplicial complex $\Gamma$ is called $(a_1,\ldots, a_r)$-balanced,
where $a_1,\ldots a_k$ are positive integers satisfying
$\dim\Delta=\sum_{k=1}^r a_k$,
if the vertex set of $\Delta$ can be partitioned into $r$ sets
$V_1,\ldots,V_r$ in such a way that
\[
|F\cap V_k|\leq a_k \quad
\text{for every face $F$ of $\Delta$ and all $1\leq k \leq r$}.
\]
Thus $(1,1,\ldots,1)$-balanced complexes are the usual balanced complexes;
on the other extreme, every $(d-1)$-dimensional simplicial complex is a
$(d)$-balanced complex. If $a_2=a_3=\cdots=a_r=1$, we write
$(a_1, 1^{r-1})$ instead of $(a_1,a_2,\ldots,a_r)$.
The following question seems to be a natural interpolation between these two
extremes.

\begin{question} \label{quest2}
Let $\Delta$ be an $i$-banner $(d-1)$-dimensional complex. Is there always
an $(i-1,1^{d-i+1})$-balanced complex $\Gamma$ such that
$f_j(\Gamma)=f_j(\Delta)$ for all $j\geq 0$?
\end{question}

The non-pure $i$-banner complexes from Example \ref{nonpure-example}
that are constructed by gluing a $(d-1)$-simplex to the $(i-2)$-skeleton of a 
simplex of dimension at least $i-1$ answer Question \ref{quest2} in the 
affirmative.  Indeed, each of these complexes is $(i-1,1^{d-i-1})$-balanced.

Another intriguing direction is to study face numbers of $i$-banner
homology spheres. The celebrated Charney--Davis conjecture \cite{ChD}
posits that if $\Delta$ is a $(2e-1)$-dimensional flag complex and if,
in addition, $\Delta$ is  a homology sphere, then
$(-1)^{e} \sum_{j=0}^{2e} (-1)^j h_{j}(\Delta)\geq 0$.
Gal's conjecture \cite{Gal} generalizes the Charney--Davis conjecture for
flag spheres. It asserts that all of the coefficients of a certain
$\gamma$-polynomial associated with a sphere are non-negative if the sphere
is flag. Gal's conjecture was further generalized to a series of conjectures
in \cite{NePe}.
It would be extremely interesting to find appropriate
analogs of these conjectures for $i$-banner spheres.

{\small
}

\end{document}